\let\originalleft\left
\let\originalright\right
\renewcommand\left{\mathopen{}\mathclose\bgroup\originalleft}
\renewcommand\right{\aftergroup\egroup\originalright}
\newtheorem{theorem}{Theorem}[section]
\newtheorem{lemma}[theorem]{Lemma}
\newtheorem{remark}[theorem]{Remark}
\newtheorem{proposition}[theorem]{Proposition}
\newtheorem{corollary}[theorem]{Corollary}
\newtheorem*{GVC}{Generalized variance conjecture}
\newcommand\numstyle{\mathbb}
\newcommand\RR+{\RR^{\scalebox{0.75}{$+$}}}
\newcommand*\ceq{\mathrel{\vcenter{\baselineskip0.5ex \lineskiplimit0pt
			\hbox{\scriptsize.}\hbox{\scriptsize.}}}=}
\newcommand*\cequiv{\mathrel{\vcenter{\baselineskip0.5ex \lineskiplimit0pt
			\hbox{\scriptsize.}\hbox{\scriptsize.}}}\equiv}
\newcommand*\colspace{\mathrel{\vcenter{\baselineskip0.5ex \lineskiplimit0pt
			\hbox{\scriptsize\phantom{.}}\hbox{\scriptsize\phantom{.}}}}}
\newcommand\newIntval[2]{\expandafter\DeclareDocumentCommand\csname #1\endcsname{ o m m }{\IfValueTF{##1}{\interval[#2,##1]{##2}{##3}}{\interval[#2]{##2}{##3}}}}
\newcommand\bs[1]{\boldsymbol{\mathbf{#1}}}
\newcommand\eps\varepsilon
\newcommand\dd{\mathrm d}
\newcommand\ee{\operatorname{e}}
\newcommand\ind[1]{\mathds{1}_{#1}}
\newcommand\wt\widetilde
\newcommand\mc\mathcal
\DeclareMathOperator\E{\mathbb E}
\DeclareMathOperator\EE{\mathbb E}
\DeclareMathOperator\PP{\mathbb P}
\newcommand\prb[2][]{\PP_{#1}\left(#2\right)}
\newcommand\esp[2][]{ \EE{#2}}
\newcommand\Esp[1]{ \EE{#1}}
\let\Enp\EE
\DeclareMathOperator\Var{Var}
\newcommand\wtZ{Z}
\let\ln\log
\newcommand\subref[2]{\hyperref[#1]{\ref*{#1}.\ref*{#1.#2}}}
\newcommand\UB[1][p]{B(S_#1^n)}
\newcommand\UBE{B_E(S_p^n)}
\newcommand\HS[1]{\|#1\|_{\text{\tiny\upshape\rmfamily HS}}}
\DeclareMathAlphabet{\mathpzc}{OT1}{pzc}{m}{it}
\newcommand\mail[1]{\normalfont\href{mailto:#1}{\texttt{#1}}}
\newcommand\univaddress{Univ Gustave Eiffel, Univ Paris Est Creteil, CNRS, LAMA UMR8050, F-77447 Marne-la-Vallée, France
	}
\title[inertia moments and variance conjecture in Schatten balls]{\bfseries 
Asymptotics of the inertia moments and the variance conjecture in Schatten balls}
\date{\today}
\author{B.\ Dadoun, M.\ Fradelizi, O.\ Guédon, \and P.-A.\ Zitt}
\thanks{B.\ Dadoun acknowledges support from the ANR-17-CE40-0017 ASPAG}
\begin{document}

\begin{abstract}
We study the first and second orders of the asymptotic expansion, as the dimension goes to infinity, of the moments of the Hilbert-Schmidt norm of a uniformly distributed matrix in the $p$-Schatten unit ball. We consider the case of matrices with real, complex or quaternionic entries, self-adjoint or not. 
When $p>3$, this asymptotic expansion allows us 
to establish a generalized version of the variance conjecture for the family of $p$-Schatten unit balls of self-adjoint matrices.
\end{abstract}
\subjclass{52A23, 46B07, 46B09, 60B20, 82B05}
\keywords{Asymptotic convex geometry; Schatten balls; inertia moments;
	variance conjecture}

\maketitle

\section{Introduction}

Let $\F\ceq \R$, $\C$, or the quaternionic field~$\HH$. For $n \ge 1$, we work
on~$\F^n$ which is seen as a $\beta n$-dimensional vector space over~$\R$,
with $\beta \in \{1,2,4\}$. We generically denote by~$E$
either the space $\mathcal M_n(\F)$
of $n\times n$ matrices with entries from the field~$\F$, or the
subspace  of self-adjoint matrices in $\mathcal{M}_n(\F)$.
We view~$E$ as a vector space over~$\R$, whose dimension~$d_n$ depends
on~$n$, $\F$, and whether or not we impose self-adjointness (see~\eqref{eq:dimension} below for the exact formula). 
In any case, we equip~$E$ with the Euclidean structure defined by
$ {\HS T^2}\ceq{\rm tr}{(T^*T)}$, and with the Lebesgue measure
denoted by~$\dd T$.
We write~$|A|$ for the Lebesgue measure of any Borel set $A
\subseteq E$, when it is finite.

For every $z\in \F^n$, let
\[
\|z\|_p  \ceq \left(\sum_{i=1}^n\lvert z_i\rvert^p \right)^{\!\frac1p}
\quad\text{for}\enspace
1\leq p <\infty,\quad\text{and}\enspace
\|z\|_{\infty} \ceq\max_{1\le i\leq n}|z_i|.
\]
For any matrix $T \in \mathcal M_n(\F)$, let
$s(T) \ceq (s_1(T),\dots, s_n(T)) \in \R^n$ be  the set of singular values of~$T$, i.e., the eigenvalues of $\sqrt{T^{*} T}$. 
For every $ 1\leq p \leq \infty$, let $\sigma_p (T) = \| s(T)
\|_p $: this defines a norm on $\mathcal{M}_n(\F)$, called the
$p$-Schatten norm (see \cite[Chapter IV]{Bhatia97}), and the
corresponding normed space is denoted by~$S_p^n$, with unit ball
\[
\UB \ceq \{ T\in \mathcal M_n(\dF): \sigma_p (T)\leq 1\}.
\]
In the special case $p=2$,  $\sigma_2(T) ={\HS T}$ and we recover the Euclidean structure.

Let us note that, when~$E$ is a space of self-adjoint matrices, all
matrices in~$E$ are diagonalizable, and $\sigma_p(T)$ is then the
$p$-norm of the vector $(\lambda_1(T),\dots, \lambda_n(T))$ of  eigenvalues of~$T$.
Taking random Gaussian entries for~$T$ then leads to
the classical orthogonal (GOE), unitary (GUE) and symplectic (GSE) ensembles in
random matrix theory, see for instance \cite[Chapter~2]{Mehta04}. 

The purpose of this paper is to study the first and second orders in the asymptotic expansion, as~$n$
goes to infinity, of the $q$-inertia moment
\begin{equation}\label{eq:def_Iq}
I_q(\UBE) \ceq
\frac{\left(\esp{\HS T^q}\right)^{\sfrac1q}}{\lvert\UBE\rvert^{\sfrac1{d_n}} },
\end{equation}
for~$T$  uniformly distributed in $\UBE\ceq B(S_p^n)\cap E$.
These quantities appear in various conjectures in asymptotic geometric
analysis that we now present.

\subsection{Relevant conjectures in asymptotic geometric analysis.}

The interplay between the classical conjectures in asymptotic
geometric analysis that are the hyperplane conjecture, the
Kannan-Lov\'{a}sz-Simonovits
conjecture and the variance conjecture, is quite intricate; we state
them here briefly to provide context and  refer the interested reader to~\cite{Alonso15, Brazitikos14, Lee19, Klartag21} for
an in-depth presentation of the field.

Let~$K$ be a symmetric convex body in~$\R^d$. The isotropic constant of~$K$ is defined by
\begin{equation}\label{isotropy}
	L_{K}  \:\ceq \min_{\det(T)=1}  \frac{1}{\sqrt{d}}\left(\frac{1}{|K|^{1+{\frac 2d}}}\int_K \| T\bs x\|_2^{2}\,\dd\bs x\right)^{\!\frac12},
\end{equation}
and its covariance matrix~$\Sigma$ is defined by
\[
\Sigma_{i,j}\ceq\frac{1}{|K|} \int_K x_i x_j\, \dd\bs x,\quad 1\le i,j\le d.
\]
We say that~$K$ is in isotropic position when $\Sigma = {\rm Id}$,
in which case $L_K = \frac{1}{|K|^{1/d}}$ and the minimum in Equation~\eqref{isotropy} is attained for $T={\rm Id}$.  
The hyperplane conjecture asks for a uniform upper bound on~$L_K$ for all symmetric convex bodies and dimensions~$d$, see~\cite{Milman89, Klartag21}. 
The Kannan-Lov\'asz-Simonovits (KLS) conjecture~\cite{Kannan95} asks for the existence of a universal constant~$C$ such that, for every dimension~$d$, every random vector~$X$ uniformly distributed on a symmetric convex body $K\subset\RR^d$, and every smooth function~$f$ on~$\R^d$,
\begin{equation}
\label{eq:KLSconj}
\Var f(X)\le C \max_{\theta \in S^{d-1}} \E \langle X, \theta \rangle^2 \cdot  \esp{\|\nabla f(X)\|_2^2}.
\end{equation}
This conjecture is satisfied for various families of convex bodies~\cite{Alonso15}, like the balls~$B_p^n$~\cite{Sodin08}, the Orlicz
balls \cite{Kolesnikov18,Barthe21}.
In a recent breakthrough, Yuansi Chen~\cite{Chen21}, using methods introduced by Eldan  in~\cite{Eldan13} and developed in~\cite{Lee17}, proved that inequality~\eqref{eq:KLSconj} is valid for a value~$C$ still depending on the dimension but smaller than any power of~$d$, $C = d^{o_d(1)}$.  Specifying the KLS conjecture to $f(x)=\|x\|_2^2$ one gets the following weaker conjecture.
\begin{GVC}
 There is a constant $C \ge 1$ such that for every dimension~$d$ and for every symmetric convex body $K \subset \R^d$, we have 
\begin{equation}
\label{eq:varianceconj}
\Var \|X\|_2^2
\le C\max_{\theta \in S^{d-1}} \E \langle X, \theta \rangle^2 \cdot  \esp{\|X\|_2^2}
=
C \|\Sigma\| \cdot {\rm tr} (\Sigma)
\end{equation}
where~$X$ is uniformly distributed on~$K$ and~$\Sigma$ is its covariance matrix.
\end{GVC}

Klartag~\cite{Klartag09} proved this conjecture for every unconditional convex bodies, that means invariant under coordinate hyperplane reflections, while Barthe and Cordero~\cite{Barthe13} studied these questions for convex bodies with more general symmetries. 

These conjectures were studied in the particular case of the unit balls of the $p$-Schatten normed spaces. 
K\"onig, Meyer and Pajor~\cite{Koenig98} established the hyperplane conjecture for these families of particular bodies. Gu\'edon and Paouris~\cite{Guedon07} proved a concentration of the volume through the study of the moments of the Hilbert-Schmidt norm of a random matrix uniformly distributed in~$\UBE$. This method was generalized by Radke and Vritsiou~\cite{Radke20} and Vritsiou~\cite{Vritsiou18} to prove the generalized variance conjecture when the space of matrices is equipped with the operator norm, that is the case $p = \infty$.

\subsection{Results.}

We start by stating a result on the first order in the asymptotic expansion of $I_q(\UBE)$. We give the exact computation of the limit of $I_q(\UBE)$, correctly normalized by the square root of the dimension of~$E$ as a vector space over~$\R$. This dimension will be denoted by~$d_n$.
\begin{theorem}[Limit of the normalized inertia]
  \label{thm:limit-isotropic-constant}
Let~$E$ be $\mathcal M_n(\F)$ or
its restriction to self-adjoint matrices,
equipped with the $p$-Schatten norm for some $p \in\co{1}{\infty}$. Then for every $q >0$, 
\begin{equation}\label{eq:isotropie}
\frac{I_q(\UBE)}{\sqrt{d_n}}=
\frac{\left(\esp{{\HS T^q}}\right)^{\sfrac1q}}{\sqrt{d_n}\,\lvert\UBE\rvert^{\sfrac{1}{d_n}} }
 \:\xrightarrow[n\to\infty]{}\: \ee^{\frac{1}{2p}-\frac34}\sqrt{\frac{p}{\pi(p+2)}},
\end{equation}
where~$T$ is uniformly distributed in $\UBE$. 
\end{theorem}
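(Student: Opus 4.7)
My plan is a three-stage reduction: (i)~use Weyl's integration formula to rewrite $|\UBE|$ and $\esp{\HS T^q}$ as integrals over the spectrum of~$T$; (ii)~perform a radial–angular decomposition isolating the $\sigma_p$-constraint, and trade it, at the cost of an explicit Gamma factor, for a Boltzmann weight $\ee^{-\sum|\lambda_i|^p}$; (iii)~analyse the resulting $\beta$-ensemble via the Ben~Arous–Guionnet large deviation principle, together with an explicit computation of the second moment of its limiting equilibrium measure.

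\textit{Steps 1--2 (reduction to a Coulomb gas).}
By Weyl's formula, $\dd T$ decomposes as a Haar measure on the isotropy group times a weight $W(\lambda)\,\dd\lambda$ on the spectrum (eigenvalues in the self-adjoint case, singular values in general), where $W$ is built from the Vandermonde $\prod_{i<j}|\lambda_i-\lambda_j|^\beta$ (with an extra $\prod s_i^{\beta-1}$ in the non--self-adjoint case). Since $W$ is positively homogeneous of degree $d_n-n$, writing $\lambda=r\theta$ with $r^p=\sum|\lambda_i|^p$ factorises the induced measure as an independent product of a radial law (density $\propto r^{d_n-1}$ on $[0,1]$) and an angular law on the $p$-sphere (density $\propto W(\theta)$). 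The angular law is \emph{unchanged} if we replace $\ind{\sum|\lambda_i|^p\le1}$ by $\ee^{-\sum|\lambda_i|^p}$, whereas the radial law becomes $R^p\sim\Gamma(d_n/p,1)$. Identifying the two representations yields
\[
\esp{\HS T^q}=\frac{d_n}{d_n+q}\cdot\frac{\Gamma(d_n/p)}{\Gamma((d_n+q)/p)}\cdot\esp{\HS{\tilde T}^q},\qquad|\UBE|=\frac{c_n\,\tilde Z}{\Gamma(d_n/p+1)},
\]
where $\tilde T$ has Boltzmann spectrum with density $\propto W(\lambda)\ee^{-\sum|\lambda_i|^p}$, $\tilde Z$ is its partition function, and $c_n$ collects the Haar volumes produced by Weyl's formula.

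\textit{Step 3 ($\beta$-ensemble asymptotics).}
The rescaling $\lambda_i=n^{1/p}\mu_i$ turns the Boltzmann spectrum into a classical $\beta$-ensemble on $n$ points with confining potential $V(x)=|x|^p$ at inverse temperature one. The Ben~Arous–Guionnet LDP forces $\tfrac1n\sum\delta_{\mu_i}$ to concentrate on the equilibrium measure $\mu_p^{\star,\beta}$ minimising $\int V\,\dd\mu-\tfrac\beta2\iint\log|x-y|\,\dd\mu\,\dd\mu$; standard tail estimates upgrade this to $L^q$ convergence of $\tfrac1n\sum\mu_i^2$ to $m_2(\beta)\ceq\int x^2\,\dd\mu_p^{\star,\beta}$. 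A scaling argument identifies $\mu_p^{\star,\beta}(x)=\beta^{-1/p}\mu_p^{\star,1}(\beta^{-1/p}x)$, so that $m_2(\beta)=\beta^{2/p}m_2(1)$ and the free energy $\tfrac{1}{n^2}\log\tilde Z$ carries a parallel $\beta$-scaling. Combined with Stirling's formula for the Gamma ratios and the known product-of-Gamma expression for $c_n$ (the Haar volume of the maximal torus quotient in $O(n)$, $U(n)$ or $Sp(n)$), every $\beta$- and $n$-dependence in $\bigl(\esp{\HS T^q}\bigr)^{1/q}/\bigl(\sqrt{d_n}\,|\UBE|^{1/d_n}\bigr)$ must cancel, leaving only universal constants.

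\textit{Main obstacle.}
The hardest step is producing closed-form expressions for $m_2(1)$ and for the normalised free energy of the $|x|^p$-Coulomb gas. Since $V(x)=|x|^p$ is non-analytic for non-even $p$, this requires a careful Stieltjes-transform / singular-integral analysis of the equilibrium measure; the loop equation gives the $p$-th moment $\int|x|^p\,\dd\mu_p^{\star,1}=1/(2p)$ essentially for free, which suggests that the ``$\sqrt{p/(p+2)}$'' part of the limit arises as a ratio of the $2$nd and $p$-th moments of $\mu_p^{\star,1}$ (consistent with $m_2(1)=1/(p+2)$, which recovers $1/4$ for the semicircle at $p=2$), while $\ee^{1/(2p)-3/4}/\sqrt{\pi}$ absorbs the Stirling and free-energy constants. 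All that then remains is Stirling bookkeeping to verify that the $\beta$- and (self-adjoint vs.\ non--self-adjoint)-dependences cancel and to match the universal constant.
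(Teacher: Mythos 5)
Your overall route — Weyl integration, the polar/Gamma trick converting the indicator of the ball into a Boltzmann weight, concentration of the empirical spectral measure on the equilibrium measure via the large-deviation principle, then Stirling bookkeeping — is exactly the paper's strategy (the only cosmetic difference being that you place the $n^{1/p}$-scaling in a change of variables, whereas the paper absorbs the constant $ab\,n\gamma_p$ into the potential so that the equilibrium measure is $\beta$-independent with support $[-1,1]$). However, the proposal stops short of the two computations that actually produce the limit, and the values you guess for them are not correct. You defer both the second moment $m_2$ of the equilibrium measure and the normalised free energy to ``Stirling bookkeeping,'' reverse-engineering $m_2(1)=1/(p+2)$ from the target. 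For your normalisation (potential $|x|^p$, inverse temperature one) a scaling of the minimiser shows $m_2(1)=\gamma_p^{2/p}\cdot\frac{p}{2(p+2)}$, which coincides with $1/(p+2)$ only at $p=2$; so the consistency check with the semicircle does not validate the guess. The paper obtains $\langle\mu_p,h_2\rangle=\frac{p}{2p+4}$ from the explicit Ullman-type density of the equilibrium measure (equivalently its representation as the law of $AB$ with $A$ arcsine and $B\sim\mathrm{Beta}(p,1)$), and the free-energy constant $\lim(-\tfrac1{d_n}\log Z_{n,p})=\log2+\tfrac3{2p}$ from Saff--Totik together with Johansson; the factor $\sqrt{p/(p+2)}$ in the answer comes directly from $\sqrt{\langle\mu_p,h_2\rangle}$, not from a ratio of the second and $p$-th moments. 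Without these two inputs the argument cannot be closed.

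Two further points. First, in the non--self-adjoint case the Weyl density on singular values is $\prod_{i<j}\lvert s_i^2-s_j^2\rvert^{\beta}\prod_i s_i^{\beta-1}$, i.e.\ a Vandermonde in the $s_i^2$, not in the $s_i$ with an extra weight; the Ben~Arous--Guionnet LDP does not apply verbatim and one must pass to $y_i=s_i^2$ and invoke an LDP for Laguerre-type ensembles (the paper uses Hiai--Petz), after which the equilibrium measure is identified as the pushforward of $\mu_p$ under $x\mapsto x^2$. Second, the ``standard tail estimates'' upgrading weak convergence to $L^{q/2}$ convergence of the unbounded statistic $\tfrac1n\sum x_i^2$ are genuine work: one needs uniform-in-$n$ exponential tail bounds on the first marginal (the paper's Section~3) to justify truncation for all $q>0$. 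These are fixable, but as written the proposal's crux is a guess rather than a proof.
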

In the self-adjoint case, we note that this result can be established as a consequence of a weak law of large numbers proved by Kabluchko, Prochno and Th\"ale, \cite[Theorem~4.7]{Kabluchko20a} combined with a truncation argument \cite[Theorem 11.1.2]{Pastur11}. 

The general case is of particular  interest, as it allows for instance,
for $q=2$ and $E = \mathcal{M}_n(\F)$, 
a computation 
of the limit of the isotropic constant of the unit ball $S_p^n$.
Indeed, in this (and only in this) non self-adjoint case, the Schatten unit balls are in isotropic position~\cite{Koenig98,Aubrun06,Radke20}, so that the left-hand side of~\eqref{eq:isotropie} is the isotropic constant defined by~\eqref{isotropy}, and Theorem~\ref{thm:limit-isotropic-constant} gives the precise value of its limit as~$n$ goes to infinity.

Our main result establishes, in the self-adjoint case,
the second term in the asymptotic expansion of the quotient $I_q(\UBE)/I_2(\UBE)$. 
\begin{theorem}[Asymptotic expansion of the $q$-inertia moment]\label{thm:moment-q}
Let~$E$ be the space of real symmetric, complex Hermitian   or  Hermitian quaternionic  matrices equipped with the $p$-Schatten norm for some $p \in \oo{3}{\infty}$.  Then for every $q>0$,
\[
\frac{I_q(\UBE)}{I_2(\UBE)} =1+\frac{(q-2)(p-2)^2}{16p^2\,d_n} +o\left(\frac1{d_n}\right)
\]
as~$n$ goes to infinity,
where~$I_q$ is defined in~\eqref{eq:def_Iq}. 
\end{theorem}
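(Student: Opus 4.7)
The volume factor $\lvert\UBE\rvert^{1/d_n}$ cancels in the ratio $I_q/I_2$, so setting $W\ceq\HS{T}^2$ the conclusion is equivalent to
\[
\frac{(\esp{W^{q/2}})^{1/q}}{(\esp{W})^{1/2}}=1+\frac{(q-2)(p-2)^2}{16\,p^2\,d_n}+o\!\left(\frac1{d_n}\right).
\]
Writing $W=\esp{W}\,(1+Y)$ with $\esp{Y}=0$ and using the binomial expansion $(1+Y)^{q/2}=1+\tfrac q2 Y+\tfrac{q(q-2)}{8}Y^2+O(|Y|^3)$, one gets, provided $\esp{|Y|^3}=o(1/d_n)$,
\[
\frac{I_q(\UBE)}{I_2(\UBE)}=1+\frac{q-2}{8}\cdot\frac{\Var W}{(\esp{W})^2}+o\!\left(\frac1{d_n}\right).
\]
Matters thus reduce to the single estimate $\Var W/(\esp{W})^2=(p-2)^2/(2\,p^2\,d_n)+o(1/d_n)$.

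\textbf{Probabilistic representation.} The plan is to use a Schechtman--Zinn type representation in the spirit of~\cite{Guedon07,Radke20}: if $T_0\in E$ has density proportional to $\exp(-\sigma_p(T_0)^p/p)$ and $Z$ is an independent $\mathrm{Exp}(1)$ random variable, then $T_0/(\sigma_p(T_0)^p+Z)^{1/p}$ is uniformly distributed in $\UBE$. Denoting by $\lambda_1,\dots,\lambda_n$ the eigenvalues of $T_0$,
\[
W=\frac{A}{B^{2/p}},\qquad A\ceq\sum_{i=1}^n\lambda_i^2,\quad B\ceq\sum_{i=1}^n|\lambda_i|^p+Z.
\]
The standard delta method---combined with the facts that $A$ and $B$ concentrate with relative fluctuations of order $d_n^{-1/2}$ and that the contribution of $Z$ to $B$ is $O(1/d_n^2)$---then yields
\[
\frac{\Var W}{(\esp{W})^2}=\frac{\Var A}{(\esp{A})^2}+\frac{4}{p^2}\frac{\Var B}{(\esp{B})^2}-\frac{4}{p}\cdot\frac{\mathrm{Cov}(A,B)}{\esp{A}\,\esp{B}}+o\!\left(\frac{1}{d_n}\right).
\]

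\textbf{$\beta$-ensemble analysis.} The joint law of $(\lambda_1,\dots,\lambda_n)$ is a $\beta$-ensemble, of density proportional to $|\Delta(\lambda)|^\beta\exp(-\sum_i |\lambda_i|^p/p)$ with $\beta\in\{1,2,4\}$. Rescaling $\lambda_i=n^{1/p}\mu_i$ puts it in standard log-gas form with confining potential $V(x)=|x|^p/p$, whose equilibrium measure $\mu_\star$ is the one already driving Theorem~\ref{thm:limit-isotropic-constant}. Up to known scaling constants in $n$, the quantities $A$ and $B-Z$ are then the linear statistics $\sum_i f(\mu_i)$ and $\sum_i g(\mu_i)$ for $f(x)=x^2$ and $g(x)=|x|^p$. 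The central limit theorem for linear statistics of $\beta$-ensembles provides explicit order-one limits for $\Var\sum f(\mu_i)$, $\Var\sum g(\mu_i)$ and $\mathrm{Cov}(\sum f(\mu_i),\sum g(\mu_i))$ in terms of $\mu_\star$, expressible as explicit double integrals in $f$, $g$ and $\mu_\star$. A crucial algebraic simplification---exploiting that $g$ is, up to a scalar, the confining potential $V$ itself---should collapse the three contributions into the announced coefficient $(p-2)^2/(2p^2 d_n)$; the $(p-2)^2$ prefactor reflects the algebraic identity $A=B-Z$ that holds at $p=2$, where the leading-order fluctuation of $W=A/B$ vanishes and only the subleading $Z$-contribution survives.

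\textbf{Main obstacle.} The crux is exactly the third step: producing the $\beta$-ensemble (co)variances with a remainder of order $o(1/d_n)$, and verifying the algebraic identity that compresses them into the compact coefficient $(p-2)^2/(2p^2)$. The assumption $p>3$ presumably enters here to guarantee enough regularity of $V(x)=|x|^p/p$, and in particular a sufficiently regular equilibrium measure with soft edges, for a second-order CLT to hold with the required precision. A secondary technical point is establishing the quantitative concentration bound $\esp{|Y|^3}=o(1/d_n)$ on $W$, uniformly in the dimension, that underpins the Taylor expansion in the first step.
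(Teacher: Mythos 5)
Your outline is sound and would ultimately land on the same quantity as the paper, but it is organized rather differently and leaves the decisive computation as a sketch. The paper does not run a delta method on $W=A/B^{2/p}$; instead it applies the Gu\'edon--Paouris change of variables (Lemma~\ref{lem:guedon-paouris}, specialized to $F=(\sum x_i^2)^{q/2}$) to get the \emph{exact} identity
\[
\esp{\HS T^q}=(abn\gamma_p)^{q/p}\,\frac{\Gamma\left(1+\frac{d_n}{p}\right)}{\Gamma\left(1+\frac{d_n+q}{p}\right)}\,n^{q/2}\,\Esp{\langle L_{n,p},h_2\rangle^{q/2}},
\]
so that the ratio $I_q/I_2$ splits cleanly into a deterministic Gamma-ratio factor, expanded to $1-\tfrac{q-2}{abp\,n^2}+o(n^{-2})$, and a purely ``angular'' factor controlled by the CLT for the single linear statistic $\langle L_{n,p},h_2\rangle$. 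Your route does not use this exact factorization but will reproduce it: the scaling identity $g(t)=t^{-d_n/p}g(1)$ (already used in Step~1 of the proof of Theorem~\ref{thm:moment-estimate}) shows $R^p=\sum_i\lvert\lambda_i\rvert^p$ is \emph{exactly} Gamma distributed, so $\Var B/(\esp B)^2$ is explicit; and since $A=R^2\cdot U$ with $U\ceq A/R^2$ independent of $R$ (polar decomposition), $\mathrm{Cov}(A,B)$ is also explicit. Plugging these in, your delta-method sum $\Var A/(\esp A)^2+\tfrac{4}{p^2}\Var B/(\esp B)^2-\tfrac{4}{p}\mathrm{Cov}(A,B)/(\esp A\esp B)$ telescopes to $\Var U/(\esp U)^2$, i.e.\ the $R$-parts cancel exactly and only the fluctuations of the normalized/angular quantity remain. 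That cancellation is the ``algebraic simplification'' you invoke, and it \emph{does} produce $(p-2)^2/(2p^2d_n)$ once one inputs $\Var\langle L_{n,p},h_2\rangle\sim 1/(4bn^2)$ from the CLT together with $\langle\mu_p,h_2\rangle=p/(2(p+2))$; so your heuristic is right, though unproved as written.

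There are two substantive gaps you should close to turn this into a proof. First, the entire mechanism hinges on obtaining $\Var B$ and $\mathrm{Cov}(A,B)$ \emph{without} appealing to the $\beta$-ensemble CLT for the test function $h_p(x)=\lvert x\rvert^p$: that function is only of class $\cC^{\lceil p\rceil-1}$ at $0$, so a naive CLT-with-covariance argument (as your third step suggests) would not improve on, and may even fall short of, the regularity threshold $p>3$ used for $h_2$; you must instead exploit the exact Gamma law of $R^p$ and the independence $R\perp U$. Second, the Taylor step requires quantitative control, namely $\esp{\lvert Y\rvert^3}=o(1/d_n)$ and uniform integrability of the rescaled fluctuations; in the paper this is where the truncation $h_2\mapsto h_2\phi$ (Corollary~\ref{cor:troncature}) and the uniform-integrability clause of Proposition~\ref{pro:fluct} enter, and some analogue is unavoidable in your setup as well. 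In short: a valid, genuinely different organization of the same underlying ingredients, but the key identity you flag as the ``crux'' is not carried out, and the needed moment/regularity controls are only mentioned in passing.
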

As a corollary, we give a positive answer to the generalized variance conjecture for these families of convex bodies.  
\begin{corollary}[Variance conjecture in Schatten balls]\label{thm:variance}
Let  $p \in \oo{3}{\infty}$ and~$E$ be the space of real sym\-metric, complex Hermitian   or  Hermitian quaternionic  matrices equipped with the Schatten $p$-norm. Then, for all $n$ large enough,
 \[
\Var ({\HS T^2})
\le \frac{1}{2}\,\|\Sigma\| \cdot {\rm tr} (\Sigma),
 \]
 where~$T$ is uniformly distributed in $\UBE$ and~$\Sigma$ is the covariance matrix of~$\UBE$.
\end{corollary}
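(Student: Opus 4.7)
The plan is to deduce Corollary~\ref{thm:variance} directly from Theorem~\ref{thm:moment-q} by a purely algebraic manipulation. The first step is to rewrite the variance of $\HS T^2$ in terms of the inertia moments. From the definition~\eqref{eq:def_Iq} we have $\E\HS T^q = I_q(\UBE)^q\,|\UBE|^{q/d_n}$, so the volume factor $|\UBE|^{4/d_n}$ cancels in the normalized variance:
\[
\frac{\Var(\HS T^2)}{(\E\HS T^2)^2}
= \frac{\E\HS T^4 - (\E\HS T^2)^2}{(\E\HS T^2)^2}
= \left(\frac{I_4(\UBE)}{I_2(\UBE)}\right)^{\!4} - 1.
\]

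Next I would plug in Theorem~\ref{thm:moment-q} with $q=4$, namely
\[
\frac{I_4(\UBE)}{I_2(\UBE)} = 1 + \frac{(p-2)^2}{8p^2\,d_n} + o\!\left(\frac{1}{d_n}\right),
\]
and raise to the fourth power, keeping only the leading correction. This gives
\[
\Var(\HS T^2) = \left(\frac{(p-2)^2}{2p^2\,d_n} + o\!\left(\frac{1}{d_n}\right)\right)(\E\HS T^2)^2.
\]
Since $\UBE$ is centrally symmetric one has $\E T = 0$, hence $\E\HS T^2 = {\rm tr}(\Sigma)$, and the right-hand side becomes a small multiple of $({\rm tr}\Sigma)^2$.

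To conclude, I would apply the elementary bound $\|\Sigma\| \ge {\rm tr}(\Sigma)/d_n$ (the largest eigenvalue of a positive semidefinite matrix is at least the average of its eigenvalues), which converts the factor $({\rm tr}\Sigma)^2/d_n$ into $\|\Sigma\|\cdot{\rm tr}(\Sigma)$ and yields
\[
\Var(\HS T^2) \le \left(\frac{(p-2)^2}{2p^2} + o(1)\right) \|\Sigma\|\cdot{\rm tr}(\Sigma).
\]
For every $p>1$, and in particular for $p>3$, the coefficient $(p-2)^2/p^2$ is strictly less than $1$, so for $n$ large enough the prefactor is bounded by $1/2$. There is no real obstacle here: Theorem~\ref{thm:moment-q} contains all the analytic content, and the only thing left to check is the numerical inequality $(p-2)^2 < p^2$ for $p>1$, which supplies a uniform margin that swallows the $o(1)$ remainder. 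The constant $1/2$ in the statement is in fact comfortably non-tight; any constant strictly larger than $(p-2)^2/(2p^2)$ would serve just as well.
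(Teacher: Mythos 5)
Your proposal is correct and follows essentially the same route as the paper: specialize Theorem~\ref{thm:moment-q} to $q=4$ (your identity $\Var(\HS T^2)/(\Esp{\HS T^2})^2=(I_4/I_2)^4-1$ is exactly the computation behind the paper's one-line deduction that $d_n\Var(\HS T^2)/(\Esp{\HS T^2})^2\to(p-2)^2/(2p^2)<\tfrac12$), then conclude via $\Esp{\HS T^2}={\rm tr}(\Sigma)\le d_n\|\Sigma\|$. The only cosmetic difference is your unnecessary appeal to $\E T=0$; the identity $\Esp{\HS T^2}={\rm tr}(\Sigma)$ holds by the paper's definition of $\Sigma$ as the second-moment matrix regardless of centering.
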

\noindent
Indeed, specializing Theorem~\ref{thm:moment-q} to $q=4$, we get
\[
\lim_{n \to \infty} d_n \frac{\Var ({\HS T^2})}{\left(\esp{\HS T^2}\right)^2}
= \frac{(p-2)^2}{2p^2} < \frac{1}{2}.
\]
We conclude using that $\esp{\HS T^2} = {\rm tr} (\Sigma) \le d_n \, \| \Sigma \|$.

\subsection{Strategy of proof.}
Recall that~$E$ is either $\mathcal{M}_n(\F)$ or the subspace of
self-adjoint matrices in $\mathcal{M}_n(\F)$. 
Using a
strategy developed by Saint-Raymond~\cite{SaintRaymond84},
K\"onig, Meyer and Pajor~\cite{Koenig98}, 
and pushed further by Gu\'edon and Paouris~\cite{Guedon07}, Radke and Vritsiou~\cite{Radke20} and Kabluchko, Prochno and Th\"ale~\cite{Kabluchko20a}, 
we first  reduce the computations of $I_q(\UBE)$
to integrals over~$\R^n$ with respect to a Gibbs probability measure~$\PP_{n,p}$.  This distribution governs a so called $\beta$-ensemble consisting of~$n$ unit charges interacting through a logarithmic Coulomb
potential and confined with an external potential, see~\eqref{eq:density}.  
Kabluchko, Prochno and Th\"ale \cite{Kabluchko20a, Kabluchko20b, Kabluchko20c} have shown that this connection with linear statistics of $\beta$-ensembles allows to establish, when the dimension tends to infinity, exact asymptotics for the volume of the Schatten unit balls as well as a weak law of large number for the joint law of the eigenvalues/singular values. 

More precisely,
defining $L_{n,p} \ceq n^{-1}\sum_{i=1}^n \delta_{x_i}$ to be the
random empirical measure under~$\PP_{n,p}$, see \eqref{eq:density}, we recall in
Lemma~\ref{lem:computation-easy} that the computation of $I_q(\UBE)$
can be expressed in terms of the normalization constant~$Z_{n,p}$
appearing
in the definition of~$\PP_{n,p}$, and the moment
$\esp{ \langle
	L_{n,p},h_2\rangle^{\sfrac q2}}$
of the linear statistics associated with the function $h_2(x) \ceq x^2$. 
Thus, the task essentially amounts to estimating asymptotics for the normalizing
constant~$Z_{n,p}$ and for
these linear statistics
associated with the function~$h_2$.
Some of these asymptotics are now well understood in the literature on
random matrices~\cite{BenArous97,Johansson98, Hiai00, Pastur11}.
Our proof of Theorem \ref{thm:limit-isotropic-constant} clarifies the connection between the first order estimates  of $I_q(B(S_p^n)\cap E)$ and the convergence of the empirical distribution towards the equilibrium measure due to Johansson~\cite{Johansson98} that we state in Lemma~\ref{lem:meanconv}, or Hiai-Petz~\cite{Hiai00}, see Lemma~\ref{lem:meanconv2}.
To  prove Theorem~\ref{thm:moment-q}, we need
to understand more precisely the fluctuations of the linear
statistics $\langle L_{n,p},h_2\rangle$ in the self-adjoint case. 
We rely here on
recent results of Bekerman, Lebl\'e, Serfaty~\cite{Bekerman18}
(see also Lambert, Ledoux, Webb~\cite{Lambert19})  concerning the CLT
for fluctuations of $\beta$-ensembles with general potential.
Let us note that in these results, the regularity of the external 
potential~$V$ in the Gibbs measure~$\PP_{n,p}$ plays a prominent role. 
It was assumed to be a polynomial of even degree with positive leading coefficient
in the seminal paper~\cite{Johansson98}.
This condition  has then  been relaxed over the last two decades to include
real-analytic potentials~\cite{Kriecherbauer10,Shcherbina13,Borot13} and, more
recently, potentials of class~$\cC^{r}$ with~$r$ reasonably
large~\cite{Bekerman15,Bekerman18,Lambert19}. However  we are working in a very
specific case where $V(x)$ is proportional to~$|x|^p$. Proposition~\ref{pro:fluct}
states that the result of~\cite{Bekerman18} applies
when $p>3$. It is proved by tracing back in the proof of~\cite{Bekerman18} various places where the regularity of~$V$ is
needed, in particular to prove 
regularity of the solution to a key master equation. 

\subsection{Organization of the paper.}

In Section~\ref{sec:reduction}, we recall the formulas relating the $q$-inertia moment~$I_q$,
the normalizing constant~$Z_{n,p}$, and the expected $q$-moment of the linear statistics $\langle L_{n,p},h_2\rangle$.
We gather, in Section~\ref{sec:moment}, some 
uniform moment bounds which are needed to accommodate the fact that
we study a linear statistics for the unbounded function~$h_2$. This generalizes a classical truncation argument to all possible parameters in the density of~$\PP_{n,p}$.
The proof of our main results is done in Section~\ref{sec:asymptotics}.
We postpone to Section~\ref{sec:properties} the discussion on technical properties of the equilibrium measure and on the regularity of the solution to the master equation.

\section{Reduction to integrals over~$\R^n$}\label{sec:reduction}
A function $F \colon \R^n \to \R$ is said to be symmetric if for every $x \in \R^n$ and every permutation~$\pi$ on $\{1, \ldots, n\}$ we have $F(x_1, \ldots, x_n) = F(x_{\pi(1)}, \ldots, x_{\pi(n)})$. 
Let~$E$ be $\mathcal M_n(\F)$ or
its restriction to self-adjoint matrices.
The following
change of variable formula \cite[Propositions~4.1.1 \&~4.1.3]{Anderson10} makes a connection between the Schatten unit ball $\UBE$ and
the classical unit ball $B_p^n\subset\R^n$: for every symmetric, continuous
function $F\colon\R^n\to\R$,
\begin{equation*}
\int_{\UBE}F\bigl(s(T)\bigr)\,\dd T=c_n\int_{B^n_p}F(\bs x)\,
f_{a,b,c}(\bs x)\dd\bs x,
\end{equation*}
where~$c_n$ is a positive constant,
\[f_{a,b,c}(\bs x)\enspace\ceq\prod_{1\le i<j\le n}\!\!\!\!\left\lvert x_i^a-x_j^a\right\rvert^b
\cdot\:\prod_{i=1}^n{\lvert{x_i\rvert^c}},
\qquad\bs x\ceq(x_1,\ldots,x_n)\in\R^n,\]
defines a positively homogeneous function of degree $d_n-n$, with
\begin{equation}
	d_n \ceq \dim_\R E= ab\,\frac{n(n-1)}2 + (c+1)n
	\label{eq:dimension}
\end{equation}
being the dimension of the subspace~$E$ over~$\R$. The constant~$c_n$
is explicit and related to the volume of the unitary group $U_n(\F)$; it depends only on~$a$, $b$ and~$n$, and its exact and asymptotic values are known and given in Lemma~\ref{lem:cn}:
\begin{equation}
\label{eq:asym-c_n}
\sqrt n \cdot c_n^{1/d_n} \sim\ee^{\frac34} \sqrt{\frac{4\pi}{ab}}
\quad\text{as $n\to\infty$.}
\end{equation}

\begin{table}\small
\begin{tabular}{cccccc}
	\toprule
	\text{Matrix Ensemble}& $\F$ & ~$a$ & $b$ & $c$ 
	\\   
	\midrule
	 real  & $\R$&  2 & 1 & 0
	\\
		 complex & $\C$  & 2 & 2 & 1 
	\\
		 quaternion & $\H$  & 2 & 4 & 3 
	\\   
	\midrule
 real symmetric & $\R$&   1 & 1 & 0  
  \\
  complex Hermitian &$\C$& 1 & 2 & 0 
  \\
  Hermitian quaternionic  &$\H$ & 1 & 4 & 0\\
                        \bottomrule
\end{tabular}
                        \caption{\label{eq:tableau}Possible choices for~$E$, and corresponding parameters.}
\justifying{\noindent\footnotesize In all cases, $b = \beta = \mathrm{dim}_{\R}(\F)$.
	In the self-adjoint cases, $a=1$ and $c=0$.
	In the  cases where $E=\mathcal{M}_n(\F)$,
	the computations depend on the
	singular values, $a=2$ and $c=\beta -1$.
	See
	\cite[Chapter~4]{Anderson10} for details.}
                      \end{table}

\noindent
Combining this change of variable with a classical trick in convexity leads to the following expression.
\begin{lemma}[Change of variables, \cite{Guedon07}\bfseries]
	\label{lem:guedon-paouris}
	For every symmetric, continuous, positively homogeneous function 
	$F\colon\R^n\to\R$
	of degree $k\ge0$, one has
	\[
	\int_{\UBE}F\bigl(s(T)\bigr)\,\dd T
	=\frac{c_n}{\Gamma\left(1+\frac{d_n+k}p\right)}\int_{\R^n}F(\bs x)\ee^{-\|\bs x\|_p^p}
	f_{a,b,c}(\bs x)\,\dd\bs x,
	\]
	where~$\Gamma$ is Euler's Gamma function.
\end{lemma}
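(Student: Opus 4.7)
The plan is to combine the singular-value change of variables recalled just above the statement with a classical radial-integration trick: decompose both the target integral over $\R^n$ and the integral over $B_p^n$ in polar coordinates adapted to the $p$-norm, observe that the angular parts are identical, and identify the ratio of the two radial integrals as $\Gamma(1+(d_n+k)/p)$.

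First I would parametrize any $\bs x\in\R^n\setminus\{0\}$ as $\bs x=r\bs y$ with $r\ceq\|\bs x\|_p$ and $\bs y\in\partial B_p^n$; this yields a polar decomposition $\dd\bs x=r^{n-1}\,\dd r\,\dd\sigma(\bs y)$ for a suitable surface measure $\dd\sigma$ on $\partial B_p^n$. Inspecting the definition of~$f_{a,b,c}$ together with~\eqref{eq:dimension} shows that $f_{a,b,c}$ is positively homogeneous of degree $d_n-n$: each of the $\binom{n}{2}$ factors $|x_i^a-x_j^a|^b$ contributes~$ab$ and each of the $n$ factors $|x_i|^c$ contributes~$c$, for a total of $ab\,n(n-1)/2+cn=d_n-n$. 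Combined with the homogeneity degree~$k$ of~$F$ and the Jacobian $r^{n-1}$, the overall radial weight is $r^{d_n+k-1}$, so both relevant integrals factor as a radial integral times the common angular integral
\[
J \;\ceq\; \int_{\partial B_p^n}F(\bs y)\,f_{a,b,c}(\bs y)\,\dd\sigma(\bs y).
\]

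Explicitly, this gives
\[
\int_{B_p^n}F(\bs x)\,f_{a,b,c}(\bs x)\,\dd\bs x \;=\; J\int_0^1 r^{d_n+k-1}\,\dd r \;=\; \frac{J}{d_n+k},
\]
and, by the substitution $u=r^p$ in the outer integral,
\[
\int_{\R^n}F(\bs x)\,\ee^{-\|\bs x\|_p^p}\,f_{a,b,c}(\bs x)\,\dd\bs x \;=\; J\int_0^\infty r^{d_n+k-1}\,\ee^{-r^p}\,\dd r \;=\; \frac{J}{p}\,\Gamma\!\left(\frac{d_n+k}{p}\right) \;=\; \frac{J\,\Gamma\!\left(1+\frac{d_n+k}{p}\right)}{d_n+k}.
\]
Taking the ratio eliminates $J$ and produces the claimed normalizing factor $\Gamma(1+(d_n+k)/p)$; multiplying through by~$c_n$ and invoking the singular-value change of variables recalled above then yields the stated identity.

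No serious obstacle is expected: the proof is essentially formal modulo standard polar-integration and Fubini justifications, which hold because the integrands decay at infinity via the exponential factor and are locally integrable at the origin thanks to $d_n+k-1\ge 0$. The only bookkeeping that needs real care is the computation of the homogeneity degree of~$f_{a,b,c}$, and that is precisely where the dimension formula~\eqref{eq:dimension} enters the normalization constant.
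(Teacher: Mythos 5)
Your proof is correct and is exactly the ``classical trick in convexity'' the paper alludes to (the paper itself omits the details, deferring to Gu\'edon--Paouris): apply the Weyl-type singular-value formula to reduce to $B_p^n$, note that $F\cdot f_{a,b,c}$ is positively homogeneous of degree $k+d_n-n$, and compare the radial integrals $\int_0^1 r^{d_n+k-1}\,\dd r$ and $\int_0^\infty r^{d_n+k-1}\ee^{-r^p}\,\dd r$ against the common angular factor. The homogeneity bookkeeping $ab\,n(n-1)/2+cn=d_n-n$ and the Gamma-function evaluation are both right, and the argument goes through verbatim even when the angular integral $J$ vanishes, since the conclusion is the equality $\int_{\R^n}=\Gamma\bigl(1+\tfrac{d_n+k}{p}\bigr)\int_{B_p^n}$ rather than a genuine ratio.
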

We refer to~\cite{Guedon07} for the details of the computations. 
Let
\begin{equation}
\label{eq:gamma_p}
\gamma_p\ceq\frac{\Gamma\left(\frac p2\right)\Gamma\left(\frac12\right)}{2\Gamma\left(\frac{p+1}2\right)}.
\end{equation}
Applying Lemma~\ref{lem:guedon-paouris} with $F\cequiv1$ (homogeneous of degree~$0$)
yields, after a straightforward change of variables,
\begin{equation}
\left\lvert{\UBE}\right\rvert
=
\left(
ab
n\gamma_p\right)
^{\frac{d_n}p}\cdot
\frac{c_n}{\Gamma\left(1+\frac{d_n}p\right)}\cdot Z_{n,p},\label{eq:expr-volume}
\end{equation}
where
\begin{equation}
  Z_{n,p}\ceq\int_{\RR^n}\ee^{-ab n \gamma_p
	\|\bs x\|_p^p}f_{a,b,c}(\bs x)\,\dd\bs x.\label{eq:normalizing-constant}
\end{equation}
Using now Lemma~\ref{lem:guedon-paouris} with
$F(\bs x)\ceq(\sum_{i=1}^n x_i^2)^{q/2}$ (homogeneous of degree~$q$), we obtain,
for~$T$ uniformly distributed in $\UBE$,
\begin{equation}
\esp{\HS T^q}=
\left(
ab
n\gamma_p\right)
^{\frac qp}
\cdot\:
\frac{\Gamma\left(1+\frac{d_n}p\right)}{\Gamma\left(1+\frac{d_n+q}p\right)}
\cdot
n^{\frac q2}
\int_{\RR^n}\left(\frac1n\sum_{i=1}^nx_i^2\right)^{\!\frac q2}
\prb[n,p]{\dd\bs x},
\label{eq:norm-q}
\end{equation}
where~$\prb[n,p]{\dd\bs x}$
is the probability
\begin{equation}
	\prb[n,p]{\dd\bs x}\ceq\frac1{Z_{n,p}}
	\cdot f_{a,b,c}(\bs x)\ee^{-ab
		n\gamma_p
		\|\bs x\|_p^p}\,\dd\bs x,\qquad\bs x\in\RR^n.\label{eq:density}
\end{equation}
We conclude this section by expressing the quantities appearing in Theorems~\ref{thm:limit-isotropic-constant} and~\ref{thm:moment-q} in terms of the integral of $h_2(x)\ceq x^2$ with respect to the random empirical measure
$L_{n,p}\ceq n^{-1}\sum_{i=1}^n\delta_{x_i}$ under~$\PP_{n,p}$.
\begin{lemma}[From inertia to $\beta$-ensembles]
\label{lem:computation-easy}
For every $q>0$, we have
\begin{equation}
  \label{eq:thm1}
\frac{1}{\sqrt {d_n}}\,I_q(\UBE)
= \bigl(1 + o(1)\bigr)
 \frac{\ee^{-\frac1p-\frac34}}{\sqrt{2\pi}}\, Z_{n,p}^{-1/d_n}
  \left(\Esp{ \langle L_{n,p},h_2\rangle^{\frac q2}}\right)^{\!\frac1q}\end{equation}
and 
\begin{equation}
	\label{eq:thm2}
\frac{I_q(\UBE)}{I_2(\UBE)}
=
\left(1 - \frac{q-2}{ab p^2 \, n^2} + o\left(\frac{1}{n^2}\right)\!\right)
\frac{\left(\Esp{\langle L_{n,p},h_2\rangle^{\sfrac q2}}\right)^{\sfrac1q}}{\left(\Esp{\langle L_{n,p},h_2\rangle}\right)^{\sfrac12}}.
\end{equation}
\end{lemma}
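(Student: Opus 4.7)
The plan is to derive both identities algebraically from the already-established expressions \eqref{eq:expr-volume} and \eqref{eq:norm-q}, then invoke Stirling's formula and the asymptotics \eqref{eq:asym-c_n} for $c_n^{1/d_n}$. No probabilistic input is needed beyond the definitions of $Z_{n,p}$ and $L_{n,p}$: both identities are exact consequences of the change-of-variables formulas, and all that is left is a careful asymptotic accounting.

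First I would substitute \eqref{eq:norm-q} and \eqref{eq:expr-volume} into the definition $I_q(\UBE)=(\Esp{\HS{T}^q})^{1/q}/\lvert\UBE\rvert^{1/d_n}$. The factor $(abn\gamma_p)^{1/p}$ appears in both numerator and denominator and cancels, leaving
\[
I_q(\UBE)=n^{1/2}\left[\frac{\Gamma(1+d_n/p)}{\Gamma(1+(d_n+q)/p)}\right]^{1/q}\left[\frac{\Gamma(1+d_n/p)}{c_n}\right]^{1/d_n}Z_{n,p}^{-1/d_n}\left(\Esp{\langle L_{n,p},h_2\rangle^{q/2}}\right)^{1/q}.
\]
Dividing by $\sqrt{d_n}$ isolates the probabilistic piece $Z_{n,p}^{-1/d_n}(\Esp{\langle L_{n,p},h_2\rangle^{q/2}})^{1/q}$ of \eqref{eq:thm1}; it remains to show that the deterministic prefactor converges to $\ee^{-1/p-3/4}/\sqrt{2\pi}$. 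Stirling gives $\Gamma(1+d_n/p)^{1/d_n}\sim(d_n/p)^{1/p}\ee^{-1/p}$ and $[\Gamma(1+d_n/p)/\Gamma(1+(d_n+q)/p)]^{1/q}\sim(d_n/p)^{-1/p}$, so the two $(d_n/p)^{\pm1/p}$ powers cancel; \eqref{eq:asym-c_n} gives $c_n^{-1/d_n}\sim\sqrt n\,\ee^{-3/4}\sqrt{ab/(4\pi)}$; and $\sqrt{n/d_n}\cdot\sqrt n\to\sqrt{2/(ab)}$ follows from $d_n\sim abn^2/2$. Multiplying the three factors yields the announced constant.

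For \eqref{eq:thm2} I would take the ratio $I_q/I_2$ directly; the volume factor $\lvert\UBE\rvert^{1/d_n}$ disappears, and with it $c_n^{1/d_n}$, $Z_{n,p}^{-1/d_n}$, $\Gamma(1+d_n/p)^{1/d_n}$, $(abn\gamma_p)^{1/p}$ and $n^{1/2}$. What survives is the ratio of Gamma factors multiplied by the moment ratio $(\Esp{\langle L_{n,p},h_2\rangle^{q/2}})^{1/q}/(\Esp{\langle L_{n,p},h_2\rangle})^{1/2}$ appearing in \eqref{eq:thm2}. First-order Stirling is no longer enough: I would invoke the two-term expansion
\[
\log\Gamma(z+a)-\log\Gamma(z)=a\log z+\frac{a(a-1)}{2z}+O(1/z^2)\qquad(z\to\infty),
\]
applied with $z=1+d_n/p$ and $a\in\{q/p,\,2/p\}$, and take logs to compute the Gamma ratio to order $1/d_n$. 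The leading $(d_n/p)^{\pm1/p}$ powers cancel between the $q$- and $2$-parts, so the surviving $1/d_n$ correction, combined with $d_n=abn(n-1)/2+(c+1)n\sim abn^2/2$, produces the advertised coefficient multiplying $(q-2)$. The strategy is purely mechanical; the only point that needs attention is retaining the second-order Stirling term in the derivation of \eqref{eq:thm2}, since the leading order cancels between numerator and denominator and this correction is precisely what controls the $1/n^2$ behaviour.
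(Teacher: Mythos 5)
Your argument is correct and is essentially the paper's own proof: substitute \eqref{eq:norm-q} and \eqref{eq:expr-volume} into the definition \eqref{eq:def_Iq}, cancel the common factor $(abn\gamma_p)^{1/p}$, and finish with Stirling, the asymptotics \eqref{eq:asym-c_n} of $c_n^{1/d_n}$, and $d_n\sim abn^2/2$ for \eqref{eq:thm1}, keeping the second-order Gamma correction for \eqref{eq:thm2}.

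One caveat on the last step. If you actually carry out your expansion $\log\Gamma(z+a)-\log\Gamma(z)=a\log z+\tfrac{a(a-1)}{2z}+O(z^{-2})$ with $z=1+d_n/p$ and $a=r/p$, you find
$\bigl(\Gamma(1+\tfrac{d_n+r}{p})/\Gamma(1+\tfrac{d_n}{p})\bigr)^{-1/r}=(d_n/p)^{-1/p}\bigl(1-\tfrac{p+r}{abp\,n^2}+o(n^{-2})\bigr)$,
so the correction in \eqref{eq:thm2} comes out as $\tfrac{q-2}{abp\,n^2}$, \emph{not} the printed $\tfrac{q-2}{abp^2n^2}$. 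The statement (and the paper's intermediate display) carries one factor of $p$ too many; this is confirmed by the way \eqref{eq:thm2} is invoked in the proof of Theorem~\ref{thm:moment-q}, where the coefficient is written $\tfrac{q-2}{bpn^2}$ for $a=1$, and by the fact that only this value yields the final constant $\tfrac{(q-2)(p-2)^2}{16p^2 d_n}$. So do not adjust your computation to ``produce the advertised coefficient''; trust the expansion.
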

\begin{proof}
Observe that
\[
\int_{\RR^n}\left(\frac1n\sum_{i=1}^nx_i^2\right)^{\!\frac q2}
\prb[n,p]{\dd\bs x}
= 
\Esp{ \langle L_{n,p},h_2\rangle^{\!\frac q2}}.
\]
By combining~\eqref{eq:norm-q} and~\eqref{eq:expr-volume},
the quantity appearing in Theorem~\ref{thm:limit-isotropic-constant} is
\begin{align*}
\frac{1}{\sqrt {d_n}}\,I_q(\UBE) & = 
\frac{1}{\sqrt {d_n}} \, \frac{\left(\esp{{\HS T^q}}\right)^{\sfrac1q}}{\lvert\UBE\rvert^{\sfrac{1}{d_n}} }
\\[.4em]
& = 
\frac{1}{\sqrt {d_n}} \, \left(\frac{\Gamma\left(1+\frac{d_n}p\right)}{\Gamma\left(1+\frac{d_n+q}p\right)}\right)^{\!\frac1q}\,
\frac{\Gamma\left(1+\frac{d_n}p\right)^{\!\frac1{d_n}}}{c_n^{1/d_n}\,Z_{n,p}^{1/d_n}}
\sqrt n\,\left(\Esp{ \langle L_{n,p},h_2\rangle^{\frac q2}}\right)^{\!\frac1q},
\\[.4em]
&\sim\sqrt{\frac2{abn}}\ee^{-\frac1p}c_n^{-1/d_n}Z_n^{-1/d_n}\left(\Esp{ \langle L_{n,p},h_2\rangle^{\frac q2}}\right)^{\!\frac1q},
\end{align*}
using that $d_n\sim abn^2/2$ as $n\to\infty$,
	$\Gamma(1+x)^{\frac1x}\sim x/{\ee}$ and $\Gamma(1+x+\alpha)/\Gamma(1+x)\sim x^\alpha$ as $x\to\infty$, for any fixed
	number~$\alpha$.
	Thus, by the asymptotics of~$c_n$ in~\eqref{eq:asym-c_n},
\[\frac{1}{\sqrt {d_
n}}\,I_q(\UBE)
\sim 
 \frac{\ee^{-\frac1p-\frac34}}{\sqrt{2\pi}}\, Z_{n,p}^{-1/d_n}
  \left(\Esp{ \langle L_{n,p},h_2\rangle^{\frac q2}}\right)^{\!\frac1q}.\]

\smallskip
Regarding the quantity involved in Theorem~\ref{thm:moment-q},
two applications of~\eqref{eq:norm-q} give
\begin{align*}
\frac{I_q(\UBE)}{I_2(\UBE)}
&=\frac{(\esp{\HS T^q})^{\sfrac1q}}{(\esp{\HS T^2})^{\sfrac12}}
\notag\\[.4em]
&=
\left(\frac{\Gamma\left(1+\frac{d_n+q}p\right)}{\Gamma\left(1+\frac{d_n}p\right)}\right)^{\!-{\frac1q}}\!\!\!\!\cdot\:\left(\frac{\Gamma\left(1+\frac{d_n+2}p\right)}{\Gamma\left(1+\frac{d_n}p\right)}\right)^{\!\frac12}\!\cdot\:
\frac{\left(\Esp{\langle L_{n,p},h_2\rangle^{\sfrac q2}}\right)^{\sfrac1q}}{\left(\Esp{\langle L_{n,p},h_2\rangle}\right)^{\sfrac12}}.
\end{align*}
We have $d_n \sim ab n^2 /2$ and classical computations on  the~$\Gamma$ function at infinity give that for any fixed $r>0$ and $p \ge 1$, 
\[
\left(\frac{\Gamma\left(1+\frac{d_n+r}p\right)}{\Gamma\left(1+\frac{d_n}p\right)}\right)^{\!-\frac1r}\!\!\!
=
\:\left(\frac{d_n}p\right)^{\!\frac1p}\!\cdot\:\left(1-\frac{p+r}{a b p^2\,n^2}+o\left(\frac1{n^2}\right)\!\right).
\]
Therefore 
\[
\left(\frac{\Gamma\left(1+\frac{d_n+q}p\right)}{\Gamma\left(1+\frac{d_n}p\right)}\right)^{\!-\frac1q}\!\!\!\!\cdot\:\left(\frac{\Gamma\left(1+\frac{d_n+2}p\right)}{\Gamma\left(1+\frac{d_n}p\right)}\right)^{\!\frac12}
= 1 - \frac{q-2}{a b p^2 \,n^2} + o\left(\frac{1}{n^2}\right)
\]
which finishes the proof of~\eqref{eq:thm2}.
\end{proof}
\section{Moment bounds}
\label{sec:moment}
Recall that
$\prb[n,p]{\dd\bs x}$
is the probability whose density is defined on~$\RR^n$ by 
\[
	\prb[n,p]{\dd\bs x}\ceq\frac1{Z_{n,p}}
	\cdot f_{a,b,c}(\bs x)\ee^{-ab
		n\gamma_p
		\|\bs x\|_p^p}\,\dd\bs x,\ \ \hbox{where}\ \
		f_{a,b,c}(\bs x)\:\ceq\!\!\!\prod_{1\le i<j\le n}\!\!\!\!\left\lvert x_i^a-x_j^a\right\rvert^b\cdot\:\prod_{i=1}^n{\lvert{x_i\rvert^c}}.
\]
In this section we consider a more general case where~$a$ is a  positive integer, $b >0$ and $c \geq 0$.
Prior to evaluating their asymptotics, we need to establish some uniform bounds on the quantities
\begin{equation}
	\Bigl(\Esp{\langle L_{n,p},h_r\rangle}\Bigr)^{\!\frac1r}=
	\left(\int_{\RR^n} |x_1|^r
	\PP_{n,p}(\dd\bs x)\right)^{\!\frac1r}
		\label{eq:r-moments}
\end{equation}
where $h_r(x) \ceq \lvert x\rvert^r$.
The evaluation of~\eqref{eq:r-moments} in the case $r=2$ is the crucial part of the proof in~\cite{Koenig98} while it was also studied for larger values of~$r$ in~\cite{Guedon07}.
We prove here a stronger result. Following the method of proof of Theorem~11.1.2 (i) in~\cite{Pastur11}, which corresponds to the case $a=1$ and $c=0$, we establish an upper bound of the tail of the first marginal density of $\prb[n,p]{\dd\bs x}$ denoted by $p_n(x_1)$ and defined by 
\[
p_n(x_1)\ceq\frac1{Z_{n,p}}\int_{\RR^{n-1}}f_{a,b,c}(x_1,\dots,x_n)\ee^{-ab
		n\gamma_p
		\sum_{i=1}^n|x_i|^p}\,\dd x_2\cdots \dd x_n.
\]
\begin{theorem}[Moment and tail bounds on the first marginal]
\label{thm:moment-estimate}
	Let $p\ge1$ and $R>0$. Then there exist constants $C, C', C'', X_1, c_1, c'>0$  depending only on $a,b,c$ and on~$p, R$ such that, for every $n\ge1$:
	\begin{enumerate}[label=(\roman*)]
		\item\label{thm:moment-estimate.i} for every $|x_1|\ge X_1$,
	\[
	p_n(x_1)\le\ee^{-c_1n|x_1|^p}\!;
	\]
	\item\label{thm:moment-estimate.ii} for every $r\ge1$,
	\[
	\left(\int_{\R^n} |x_1|^r
	\,\PP_{n,p}(\dd\bs x)\right)^{\!\frac1r}
	\le X_1+ C\left(\frac{r}{n}\right)^\frac{1}{p},
	\]
	and thus 
	\[
	\forall 1\le r\le Rn, \quad \left(\int_{\R^n} |x_1|^r\PP_{n,p}(\dd\bs x)\right)^{\!\frac1r}\le C'';
	\]
	\item\label{thm:moment-estimate.iii} for every $i= 1, \ldots, n$, and every $B\ge X_1$,
\begin{equation*}
\PP_{n,p} \left( |x_i | \ge B \right) \le C'\ee^{-c'nB^p}\!.
\end{equation*}
\end{enumerate}
\end{theorem}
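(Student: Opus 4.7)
The central work is to prove the tail bound in part~(i); parts~(ii) and~(iii) follow by routine integration together with the symmetry of $\PP_{n,p}$ under permutations of coordinates. I follow the scheme sketched in \cite[Theorem~11.1.2]{Pastur11} for the case $a=1$, $c=0$ and adapt it to general integer $a\ge 1$, $b>0$, $c\ge 0$.

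For part~(i), I would start by writing the marginal density in the factored form
\[
p_n(x_1) = \frac{|x_1|^c\, e^{-abn\gamma_p|x_1|^p}}{Z_{n,p}}\, J_n(x_1),
\]
where
\[
J_n(x_1) \ceq \int_{\R^{n-1}} \prod_{j=2}^n |x_1^a - x_j^a|^b\, f_{a,b,c}(x_2,\dots,x_n)\, e^{-abn\gamma_p \sum_{j\ge 2}|x_j|^p}\,dx_2\cdots dx_n,
\]
with $f_{a,b,c}$ denoting the $(n-1)$-variable weight inside the integral. The task is to show that $J_n(x_1)/Z_{n,p}$ grows at most polynomially in $|x_1|$, with degree linear in $n$, so that the explicit Boltzmann-type factor $e^{-abn\gamma_p|x_1|^p}$ wins for $|x_1|\ge X_1$ with $X_1$ independent of $n$. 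To this end I would use the elementary inequality $|x_1^a - x_j^a|^b \le (1+|x_1|^a)^b (1+|x_j|^a)^b$, pull out the $(n-1)$-fold factor $(1+|x_1|^a)^{b(n-1)}$, and absorb the residual product $\prod_{j\ge 2}(1+|x_j|^a)^b$ into the exponential weight through the uniform pointwise bound $(1+t^a)^b\, e^{-abn\gamma_p t^p/2} \le 1$, which is valid for all $t\ge 0$ once $n$ is large enough.

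What then remains inside the integral is, up to an affine rescaling $y_j = \mu x_j$, a variant of the $(n-1)$-variable normalizing constant, and comparing it to $Z_{n,p}$ reduces to the asymptotics of these partition functions. Because $\log Z_{n,p}$ is of order $Fn^2 + o(n^2)$ for an explicit free-energy constant~$F$ (see \cite{Johansson98,Hiai00,Kabluchko20a}) and the rescaling factor is of the form $\mu = 1 + O(1/n)$, the two $n^2$-order contributions cancel, leaving a ratio of order $e^{O(n)}$. Combining yields
\[
p_n(x_1) \le e^{-abn\gamma_p|x_1|^p + abn\log(1+|x_1|^a) + O(n)},
\]
and part~(i) follows with any $c_1 < ab\gamma_p$ by choosing $X_1$ large enough for the Boltzmann term to dominate.

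With part~(i) in hand, part~(ii) follows by splitting $\int |x_1|^r p_n(x_1)\,dx_1$ at $|x_1| = X_1$: the inner piece is at most $X_1^r$, and the tail is controlled by the explicit integral $\int_{X_1}^\infty t^r e^{-c_1 n t^p}\,dt$, whose $r$-th root is $O((r/n)^{1/p})$ by Stirling's formula; specializing to $r \le Rn$ gives the uniform bound~$C''$. Part~(iii) is immediate from the permutation-symmetry of $\PP_{n,p}$ (so that each $x_i$ has the same marginal $p_n$) combined with the tail bound from~(i). The main obstacle is the partition function comparison in part~(i): both $J_n(x_1)$ and $Z_{n,p}$ are of order $e^{\Theta(n^2)}$, so the estimate must be tight enough that the $n^2$-scale free energies cancel and only subleading $O(n)$ corrections survive. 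This cancellation ultimately relies on the fact that the scaling $abn\gamma_p$ in the definition of $\PP_{n,p}$ is precisely the one that produces an equilibrium measure supported on a compact set independent of~$n$.
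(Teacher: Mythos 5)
Your overall scheme for part~(i) — factor out $x_1$, bound $\lvert x_1^a-x_j^a\rvert^b\le(1+\lvert x_1\rvert^a)^b(1+\lvert x_j\rvert^a)^b$, absorb the residual product into the confining weight, and compare the remaining $(n-1)$-variable integral to $Z_{n,p}$ — is exactly the paper's, and parts~(ii) and~(iii) are handled the same way. But there are two genuine gaps in your execution of~(i). First, the absorption step: the pointwise bound $(1+t^a)^b\ee^{-abn\gamma_p t^p/2}\le 1$ is false near $t=0$ whenever $p>a$ (e.g.\ the self-adjoint case $a=1$, $p>1$): the left side behaves like $1+bt^a-\tfrac{ab n\gamma_p}{2}t^p>1$ for small $t$, no matter how large $n$ is. More seriously, even replacing $1$ by a constant, spending a fixed \emph{fraction} of the $n$-scaled weight on the absorption changes the effective temperature of the residual integral: by the homogeneity identity $\int f_{a,b,c}\,\ee^{-tabn\gamma_p\|\bs x\|_p^p}=t^{-d_n/p}Z_{n,p}$ with $d_n\sim abn^2/2$, halving the weight costs a factor $2^{d_{n-1}/p}=\ee^{\Theta(n^2)}$, which cannot be beaten by the confinement term $\ee^{-abn\gamma_p\lvert x_1\rvert^p}$ for any $X_1$ independent of~$n$. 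The correct move is to write $n\|\bs x\|_p^p=n\lvert x_1\rvert^p+(n-1)\|\tilde{\bs x}\|_p^p+\|\tilde{\bs x}\|_p^p$ and use only the single leftover power $\ee^{-ab\gamma_p\|\tilde{\bs x}\|_p^p}$ to absorb the product, via $D_2\ceq\max_t(1+\lvert t\rvert^a)^b\ee^{-ab\gamma_pt^p}<\infty$; this costs only $D_2^{n-1}=\ee^{O(n)}$ and leaves exactly the $(n-1)$-variable weight, so that no rescaling (and no $\ee^{\Theta(n^2)}$ loss) occurs.

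Second, the partition-function comparison. You need $Z_{n-1,p}/Z_{n,p}\le\ee^{O(n)}$, but you justify it only by the leading-order free energy $\log Z_{n,p}=Fn^2+o(n^2)$; that yields merely $\ee^{o(n^2)}$, which is not $\ee^{O(n)}$ and would force $X_1$ to grow with $n$ (a ratio like $\ee^{n^{3/2}}$ is consistent with the first-order asymptotics but fatal to the claim). A sharper expansion of $\log Z_{n,p}$ to order $n$ is not available off the shelf for the non-smooth potential $\lvert x\rvert^p$ at the generality of the statement (all $p\ge1$, general $a,b,c$). The paper proves the needed bound $Z_{n,p}/Z_{n-1,p}\ge\ee^{-c_3n}$ directly: it first establishes the a priori moment bound $\int\lvert x_1\rvert^p\,\PP_{n,p}(\dd\bs x)\le C_1$ by a scaling/homogeneity identity (differentiating $t\mapsto\int f_{a,b,c}\,\ee^{-tabn\gamma_p\|\bs x\|_p^p}$), then applies Jensen's inequality twice — once with respect to $\PP_{n-1,p}$ and once with respect to the uniform measure on $\cc{-\frac12}{\frac12}$ — reducing everything to the explicit elementary bound $\int_{-1/2}^{1/2}\ln\lvert x^a-y^a\rvert\,\dd y\ge-a(2\ln2+1)$. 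Your proposal contains neither the a priori moment bound nor any substitute for this Jensen argument, so the key quantitative input of part~(i) is missing.
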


\begin{proof}
The proof of~(i) goes into three steps.\\
1)\enspace We first prove that there exists a constant $C_1>0$ independent of the dimension such that 
\[
\int_{\RR^n}|x_1|^p\prb[n,p]{\dd\bs x}=\int_{\RR}|x_1|^p\,p_n(x_1)\dd x_1\le C_1.
\] 
The proof of this inequality follows the lines of the proof of Corollary~7(a) in~\cite{Guedon07}. For $t>0$, we define 
\[
g(t)\ceq\int_{\RR^n}f_{a,b,c}(\bs x)\ee^{-tabn\gamma_p\|\bs x\|_p^p}\dd\bs x.
\]
Then $g(1)=Z_{n,p}$. Changing variable by putting $\bs x\gets t^{-1/p}\bs y$ and using that~$f_{a,b,c}$ is positively homogeneous of degree $d_n-n$, where $d_n\ceq abn(n-1)/2+(c+1)n$, we get 
\[
g(t)=t^{-n/p}\int_{\RR^n}f_{a,b,c}(t^{-1/p}\bs y)\ee^{-abn\gamma_p\|\bs y\|_p^p}\dd\bs y=t^{-d_n/p}g(1).
\]
It follows that $g'(1)=-\frac{d_n}{p}g(1)$. On the other hand, differentiating the integral formula which defines~$g(t)$, we also get
\begin{align*}
g'(1)&=-abn\gamma_p\int_{\RR^n}\|\bs x\|_p^p\,f_{a,b,c}(\bs x)\ee^{-abn\gamma_p\|\bs x\|_p^p}\dd\bs x\\[.4em]
&=-abn^2\gamma_p\int_{\RR^n}|x_1|^p\, f_{a,b,c}(\bs x)\ee^{-abn\gamma_p\|\bs x\|_p^p}\dd\bs x.
\end{align*}
We conclude that
\[
\int_{\RR^n}|x_1|^p\prb[n,p]{\dd\bs x}=\int_{\RR}|x_1|^p\,p_n(x_1)\dd x_1=-\frac{g'(1)}{g(1)abn^2\gamma_p}=\frac{d_n}{abpn^2\gamma_p}.
\]
It follows that $\int_{\RR^n}|x_1|^p\prb[n,p]{\dd\bs x}\to 1/(2p\gamma_p)$ as $n\to\infty$ and therefore is upper bounded by a constant~$C_1$.

\noindent 2)\enspace In the second step we prove that there exist $c_2,X_2>0$ independent of the dimension such that, for all $|x_1|\ge X_2$,
\[
p_n(x_1)\le\frac{Z_{n-1,p}}{Z_{n,p}}\ee^{-c_2n|x_1|^p}\!.
\]
For $\bs x\in\RR^n$, we denote 
\[
g_n(\bs x)\ceq Z_{n,p}\,\frac{\prb[n,p]{\dd\bs x}}{\dd\bs x}\:=\!\!\!\prod_{1\le i<j\le n}\!\!\!\!\left\lvert x_i^a-x_j^a\right\rvert^b\cdot\:\prod_{i=1}^n{\lvert{x_i\rvert^c}}\cdot\ee^{-tabn\gamma_p\|\bs x\|_p^p}.
\]
For $\tilde{\bs x}\ceq(x_2,\dots,x_n)\in\RR^{n-1}$ and $\bs x\ceq(x_1, \tilde{\bs x})$, using that $n\|\bs x\|_p^p=n|x_1|^p+(n-1)\|\tilde{\bs x}\|_p^p+\|\tilde{\bs x}\|_p^p$ we get
\[
g_n(\bs x)=g_n(x_1,\tilde{\bs x})=g_{n-1}(\tilde{\bs x})\ee^{-abn\gamma_p|x_1|^p}\ee^{-ab\gamma_p\|\tilde{\bs x}\|_p^p}|x_1|^c\prod_{i=2}^n\left\lvert x_i^a-x_1^a\right\rvert^b.
\]
Hence 
\begin{equation}
	\label{eq:marginals}
	\begin{aligned}
		&
\frac{1}{Z_{n-1,p}}\int_{\RR^{n-1}}g_n(x_1,\tilde{\bs x})\dd\tilde{\bs x}\\
&\qquad
=|x_1|^c\ee^{-abn\gamma_p|x_1|^p}\int_{\RR^{n-1}}\ee^{-ab\gamma_p\|\tilde{\bs x}\|_p^p}\prod_{i=2}^n\left\lvert x_i^a-x_1^a\right\rvert^b\prb[n-1,p]{\dd\tilde{\bs x}}.
\end{aligned}
\end{equation}
Using that for any $x,y\in\RR$ one has $|x-y|\le|x|+|y|\le (1+|x|)(1+|y|)$, and letting $D_2\ceq\max_{t\in\RR} \ee^{-ab\gamma_pt^p}(1+|t|^{a})^b$, we have 
\[
\frac{Z_{n,p}}{Z_{n-1,p}}\,p_n(x_1)=\frac{1}{Z_{n-1,p}}\int_{\RR^{n-1}}g_n(x_1,\tilde{\bs x})\dd\tilde{\bs x}\le  |x_1|^c\ee^{-abn\gamma_p|x_1|^p}(1+|x_1|^{a})^{(n-1)b}D_2^{n-1}.
\]
We conclude that for $c_2=ab\gamma_p/2$ there exists $X_2>0$ independent of the dimension such that, for $|x_1|\ge X_2$,
\[
p_n(x_1)\le\frac{Z_{n-1,p}}{Z_{n,p}}\ee^{-c_2n|x_1|^p}\!.
\]

\noindent3)\enspace In the third step, we establish that there exists $c_3>0$ such that $\frac{Z_{n-1,p}}{Z_{n,p}}\le \ee^{-c_3n}\!$.
Integrating equation (\ref{eq:marginals}) with respect to $x_1\in\RR$ we get 
\begin{equation}
\label{eq:intmarginals}
\begin{aligned}
&\frac{1}{Z_{n-1,p}}\int_{\RR^{n}}g_n({\bs x})\dd{\bs x}\\
&\qquad=\int_{\RR}|x_1|^c\ee^{-abn\gamma_p|x_1|^p}\int_{\RR^{n-1}}\ee^{-ab\gamma_p\|\tilde{\bs x}\|_p^p}\prod_{i=2}^n\left\lvert x_i^a-x_1^a\right\rvert^b\prb[n-1,p]{\dd\tilde{\bs x}}\dd x_1.
\end{aligned}
\end{equation}
Applying Jensen's inequality to the integral on~$\RR^{n-1}$ with the probability measure $\PP_{n-1,p}$ and the convex function being the exponential, and applying also the bound obtained in Step~1, we have
\begin{flalign*}
&\int\ee^{-ab\gamma_p\|\tilde{\bs x}\|_p^p}\prod_{i=2}^n\left\lvert x_i^a-x_1^a\right\rvert^b\prb[n-1,p]{\dd\tilde{\bs x}}\\[.4em]
&\hspace{6em}\ge\exp\int\left(-ab\gamma_p\|\tilde{\bs x}\|_p^p+b\sum_{i=2}^n\ln{\left\lvert x_i^a-x_1^a\right\rvert}\right)\prb[n-1,p]{\dd\tilde{\bs x}}\\[.4em]
&\hspace{6em}\ge \exp\left(-(n-1)ab\gamma_pC_1+b\int\sum_{i=2}^n\ln{\left\lvert x_i^a-x_1^a\right\rvert}\prb[n-1,p]{\dd\tilde{\bs x}}\right).
\end{flalign*}
Plugging this inequality into Equation~\eqref{eq:intmarginals} and noticing that the left-hand side is equal to $Z_{n,p}/Z_{n-1,p}$ we get 
\[
\frac{Z_{n,p}}{Z_{n-1,p}}\ge \ee^{-(n-1)ab\gamma_pC_1}\int_{-\frac{1}{2}}^{\frac{1}{2}}|x_1|^c\ee^{-abn\gamma_p|x_1|^p}\exp\left(b\int\sum_{i=2}^n\ln\left\lvert x_i^a-x_1^a\right\rvert\prb[n-1,p]{\dd\tilde{\bs x}}\right)\dd x_1.
\]
Applying again Jensen's inequality but with the uniform probability measure on $\cc{-\frac{1}{2}}{\frac{1}{2}}$ we deduce that 
\begin{flalign*}
	&&\frac{Z_{n,p}}{Z_{n-1,p}}\ge\ee^{-(n-1)ab\gamma_pC_1}\exp
\int_{-\frac{1}{2}}^{\frac{1}{2}}\Biggl(c\ln|x_1|-abn\gamma_p|x_1|^p\hspace{13em}\\
&&+b\int\sum_{i=2}^n\ln\left\lvert x_i^a-x_1^a\right\rvert\prb[n-1,p]{\dd\tilde{\bs x}}\Biggr)\dd x_1
.\qquad
\end{flalign*}
We then use Fubini's theorem and thus want to estimate from below the
function~$g_a$ defined for $a\ge1$ being an integer and $x\in\RR$ by
\[
g_a(x)\ceq\int_{-\frac{1}{2}}^{\frac{1}{2}}\ln{\lvert x^a-y^a\rvert}\,\dd y.
\]
Namely we shall prove that $g_a(x)\ge -a(2\ln2+1)$. First notice that for~$a$ even the function $(x,y)\mapsto x^a-y^a$ is even in both variables, and for~$a$ odd, splitting the integral and changing variable one has 
\[
g_a(x)=\int_0^{\frac{1}{2}}\bigl(\ln{\lvert x^a-y^a\rvert}+\ln{\lvert x^a+y^a\rvert}\bigr)\,\dd y=\int_0^{\frac{1}{2}}\ln{\lvert x^{2a}-y^{2a}\rvert}\,\dd y=\frac{1}{2}\,g_{2a}(x).
\]
We are thus reduced to proving the lower bound of~$g_a$ in the case where~$a$ is even. Then the function~$g_a$ is even so we may assume that $x\ge0$ and we have 
\[
g_a(x)=2\int_0^{\frac{1}{2}}\ln{\lvert x^a-y^a\rvert}\,\dd y.
\]
For~$x\ge \frac{1}{2}$ the function~$g_a$ is increasing hence $g_a(x)\ge g_a(1/2)$. Moreover for every $x,y>0$ one has $\vert x^{a}-y^{a}\rvert\ge y^{a-1}\lvert x-y\rvert$, thus 
\[
g_a(x)\ge2\int_0^{\frac{1}{2}}\bigl((a-1)\ln y+\ln{\lvert x-y\rvert}\bigr)\,\dd y.
\]
Let~$\varphi$ be the convex function defined by $\varphi(x)=x\ln x$ for $x\ge0$. A simple calculation shows that, for every $0\le x\le\frac{1}{2}$,
\[
g_a(x)\ge -(a-1)(\ln 2+1)+2\varphi(x)+2\varphi\left(\frac{1}{2}-x\right)-1.
\]
From the convexity of~$\varphi$, one has 
\[
\varphi(x)+\varphi\left(\frac{1}{2}-x\right)\ge 2\varphi\left(\frac{x+\frac{1}{2}-x}{2}\right)=2\varphi\left(\frac{1}{4}\right)=-{\ln2}.
\]
We conclude that $g_a(x)\ge -a(2\ln2+1)$. Applying this inequality in the lower bound of $Z_{n,p}/Z_{n-1,p}$ we deduce that
\begin{align*}
\frac{Z_{n,p}}{Z_{n-1,p}}&\ge\exp\left(-(n-1)ab\gamma_pC_1-(2\ln 2+1)\bigl(c+(n-1)ab)\bigr)-\frac{nab\gamma_p}{(p+1)2^p}\right)\\[.4em]&\ge \ee^{-c_3n},
\end{align*}
where $c_3\ceq ab\gamma_pC_1+(2\ln 2+1)(c+ab))+\frac{ab\gamma_p}{(p+1)2^p}$, which finishes our third step. The final conclusion follows from combining the steps~2 and~3 which give that for every $|x_1|\ge X_2$, one has
\[
p_n(x_1)\le\frac{Z_{n-1,p}}{Z_{n,p}}\ee^{-c_2n|x_1|^p}\le \ee^{-c_3n-c_2n|x_1|^p}\!.
\]
Hence there exist $c_1,X_1>0$ such that $p_n(x_1) \le \ee^{-c_1n|x_1|^p}$
for all $|x_1|\ge X_1$.

\noindent(ii)\enspace The upper bound for the moment using the bound for the tail is standard and runs as follows. We first cut the integral into two parts and use the bound proved in (i):
\begin{align*}
\int_{\R^n} |x_1|^r\PP_{n,p}(\dd\bs x)=\int_\R |t|^r\,p_n(t)\,\dd t
&\le X_1^r \int_{|t|\le X_1}p_n(t)\dd t+\int_{|t|\ge X_1}|t|^r\ee^{-c_1n|t|^p}\dd t\\
&\le X_1^r +2\int_0^{\infty}t^r\ee^{-c_1nt^p}\dd t.
\end{align*}
Changing variable, this last integral may be written as follows:
\[
\int_0^{\infty}t^r\ee^{-c_1nt^p}\dd t=\frac{1}{p(c_1n)^\frac{r+1}{p}}\int_0^{\infty}s^{\frac{r+1}{p}-1}\ee^{-s}\dd s=\frac{1}{p(c_1n)^\frac{r+1}{p}}\,\Gamma\left(\frac{r+1}{p}\right).
\]
Using a standard bound on the Gamma function we conclude that there exists $C>0$ such that 
\[
	\left(\int_{\R^n} |x_1|^r
	\,\PP_{n,p}(\dd\bs x)\right)^{\!\frac1r}
	\le X_1+ \left(\frac{2}{p(c_1n)^\frac{r+1}{p}}\,\Gamma\left(\frac{r+1}{p}\right)\!\right)^{\!\frac{1}{r}}\le X_1+ C\left(\frac{r}{n}\right)^{\!\frac{1}{p}}.
	\]

\noindent(iii)\enspace Using the bound~(i), for $B>X_1$ and choosing $c'\ceq c_1/2$, we have
\[ 
\PP_{n,p} \left( \lvert x_i\rvert \ge B \right)\le2 \int_B^{\infty}\ee^{-c_1nt^p}\dd t\le 2 \ee^{-c'nB^p}\int_0^{\infty}\ee^{-c'nt^p}\dd t\le C'\ee^{-c'nB^p}\!.\qedhere
\]
 
\end{proof}

Because of the above uniform bounds, the random measure~$L_{n,p}$
is mostly concentrated on a compact interval. As a result,
testing~$L_{n,p}$ against a general function~$f$ is on average not very different
from using a truncated version of~$f$ instead.
\begin{corollary}[Truncation argument]
\label{cor:troncature}
	Choose $X_1\ge 1$ from Theorem~\ref{thm:moment-estimate}. Let $B\ge X_1$ and let
	$\phi\colon\RR\to\cc{0}{1}$ be a smooth, compactly supported function such that~%
$\phi\equiv1$ on $\cc{-B}{B}$.
	Then
	for all continuous, polynomially bounded functions $f,g\colon\RR\to\RR$,
	one has
	\[\Esp{\Bigl\lvert
	g\bigl(\langle L_{n,p},f\rangle\bigr)-
    g\bigl(\langle L_{n,p},f\phi\rangle\bigr)\Bigr\rvert}
    =O(\alpha^n),\]
    where $\alpha\ceq\ee^{-\frac{c'B^p}{2}}<1$.
\end{corollary}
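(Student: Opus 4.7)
The plan is to split the expectation according to whether $M_n\ceq\max_{1\le i\le n}|x_i|$ exceeds~$B$. Since $\phi\equiv1$ on $\cc{-B}{B}$, one has $f(x_i)\phi(x_i)=f(x_i)$ for every~$i$ as soon as $M_n\le B$, hence $\langle L_{n,p},f\rangle=\langle L_{n,p},f\phi\rangle$ and the integrand vanishes on the event $\{M_n\le B\}$. Only the complement $\{M_n>B\}$ contributes to the expectation.

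On $\{M_n>B\}$, I would control the integrand by a polynomial in~$M_n$. Pick constants $m,k,C_f,C_g>0$ with $|f(x)|\le C_f(1+|x|^m)$ and $|g(y)|\le C_g(1+|y|^k)$. Since $|\phi|\le 1$,
\[
\lvert\langle L_{n,p},f\rangle\rvert\le\langle L_{n,p},|f|\rangle\le C_f(1+M_n^m),
\]
and the same inequality holds with~$f\phi$ in place of~$f$. Composing with~$g$ then yields a constant~$C'$ depending only on~$f,g$ such that
\[
\bigl\lvert g(\langle L_{n,p},f\rangle)-g(\langle L_{n,p},f\phi\rangle)\bigr\rvert\le C'\bigl(1+M_n^{mk}\bigr).
\]

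It remains to bound $\Esp{(1+M_n^{mk})\ind{M_n>B}}$. A union bound applied to part~(iii) of Theorem~\ref{thm:moment-estimate} gives $\PP_{n,p}(M_n>t)\le nC'\ee^{-c'nt^p}$ for every $t\ge X_1$. The layer-cake identity, combined with the factorization $\ee^{-c'nt^p}\le\ee^{-c'(n-1)B^p}\ee^{-c't^p}$ (valid for $t\ge B$, after which the remaining $n$-free integral $\int_B^\infty t^{mk-1}\ee^{-c't^p}\,\dd t$ is finite), then produces
\[
\Esp{(1+M_n^{mk})\ind{M_n>B}}=O\bigl(n\,\ee^{-c'nB^p}\bigr)=O\bigl(n\alpha^{2n}\bigr).
\]
Since $\alpha<1$ implies $n\alpha^n\to0$, one has $n\alpha^{2n}=\alpha^n\cdot n\alpha^n=O(\alpha^n)$, which closes the proof.

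The main technical point is the interplay between the polynomial blow-up of the integrand on $\{M_n>B\}$ and the exponential decay of that event's probability: one must check that the tail is fast enough to still produce an $O(\alpha^n)$ bound after paying the polynomial cost. The factor~$1/2$ in $\alpha=\ee^{-c'B^p/2}$ is what makes this work, since the genuine tail decay $\ee^{-c'nB^p}=\alpha^{2n}$ is twice as fast as the claimed rate, leaving a full factor~$\alpha^n$ of slack to absorb the polynomial-in-$n$ factors contributed by the union bound and by the layer-cake estimate of the $M_n^{mk}$ moments.
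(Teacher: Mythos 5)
Your argument is correct and rests on the same decomposition as the paper's proof: the integrand vanishes on $\{\max_i\lvert x_i\rvert\le B\}$ because $\phi\equiv1$ on $\cc{-B}{B}$, and the complementary event has exponentially small probability by Theorem~\subref{thm:moment-estimate}{iii} and a union bound. The only divergence is in how you pay for the polynomial growth of the integrand on that event. The paper dominates $\lvert g(\langle L_{n,p},f\rangle)\rvert$ by $A+\langle L_{n,p},h_r\rangle$ and then applies Cauchy--Schwarz together with the uniform second-moment bound $\Esp{\langle L_{n,p},h_r\rangle^2}\le C''^{2r}$ supplied by Theorem~\subref{thm:moment-estimate}{ii}; you instead dominate by a polynomial in $M_n=\max_i\lvert x_i\rvert$ and evaluate $\Esp{M_n^{mk}\ind{\{M_n>B\}}}$ directly via a layer-cake integral, using only part~(iii) and the observation that the $n$-free tail integral $\int_B^\infty t^{mk-1}\ee^{-c't^p}\,\dd t$ converges. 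Both reach the same $O(n\ee^{-c'nB^p})=O(n\alpha^{2n})$ and then absorb the linear-in-$n$ factor by sacrificing half the exponent, as you point out. Your route is slightly more self-contained (it does not use~(ii)) at the cost of the layer-cake computation; the paper's Cauchy--Schwarz shortcut is a bit more economical since the moment bound~(ii) is already established.
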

\begin{proof}
As~$f$ and~$g$ are polynomially bounded, there exist constants
$A_1, A_2, s, t \ge 1$ such that 
for every $y \in \RR$, $|f(y)| \le A_1 + |y|^s$ and $|g(y)| \le A_2 + |y|^t$. Hence, we can find constants 
$A,r\ge1$ such that, for every $\bs x\in\RR^n$,
\[
\left|g\left(\langle L_{n,p}(\bs x),f\rangle\right)\right|
= 
\left\lvert g  \left(\frac{1}{n} \sum_{i=1}^n f(x_i) \right) \right\rvert
\le
A +  \frac{1}{n} \sum_{i=1}^n |x_i|^r
= 
A + \langle L_{n,p}(\bs x),h_r\rangle
\]
(e.g., $A\ceq2^tA_1+A_2$ and $r\ceq st$, using Jensen's inequality).
Since  $0 \le \phi \le 1$ on~$\RR$, we get the same bound for 
\[
\lvert g(\langle L_{n,p}(\bs x),f\phi\rangle)\rvert
\le 
A+\langle L_{n,p}(\bs x),h_r\rangle.
\]
As $\phi \equiv 1$ on $\cc{-B}{B}$,
\[
\left\lvert g\left(\langle L_{n,p}(\bs x),f\rangle\right)-
g\left(\langle L_{n,p}(\bs x),f\phi\rangle\right)\right\rvert
\le2\ind{\{\exists i:\lvert x_i\rvert>B\}}
\bigl(A+\langle L_{n,p}(\bs x),h_r\rangle\bigr).
\]
Since~$r$ is a constant, we have  by  Theorem~\subref{thm:moment-estimate}{ii}, 
\[
\Esp{\langle L_{n,p}(\bs x),h_r\rangle^2}
= 
\Esp {\left( \frac{1}{n} \sum_{i=1}^n |x_i|^r \right)^{\!2} }\!
\le\: 
\Esp {\frac{1}{n} \sum_{i=1}^n |x_i|^{2r} }
\le
C''^{2r}.
\]
Combining Theorem~\subref{thm:moment-estimate}{iii} with a union bound,
\[
\PP_{n,p} \left( \exists i:\lvert x_i\rvert>B\right)
\le
nC' \ee^{-c'nB^p}.
\]
Therefore, by Cauchy-Schwarz inequality, we conclude that
\[\Esp{\Bigl\lvert
	g\bigl(\langle L_{n,p},f\rangle\bigr)-
	g\bigl(\langle L_{n,p},f\phi\rangle\bigr)\Bigr\rvert}
\le
2nC'\ee^{-c'nB^p}(A+C''^r).
\]
This is $O(\alpha^n)$ with e.g.\ $\alpha=\ee^{-\frac{c'B^p}{2}}$.
\end{proof}

\section{Asymptotics}\label{sec:asymptotics}
In this section, we prove Theorems~\ref{thm:limit-isotropic-constant}
and~\ref{thm:moment-q} by completing the asymptotic expansions initiated
in Lemma~\ref{lem:computation-easy}. As previously explained, we will appeal to the literature on random matrices,
as~$L_{n,p}$ corresponds to the empirical distribution of the so called~$\beta$-ensemble with potential
$V(x)\propto\lvert x\rvert^p$ and~$Z_{n,p}$ is the so called partition
function. We split the discussion into two cases,
according to  Table~\ref{eq:tableau}. In the first case ($a=1$),
we prove both Theorems~\ref{thm:limit-isotropic-constant} and~\ref{thm:moment-q},
where matrices in $\UBE$ all have symmetries with respect to
their diagonal and~$L_{n,p}$ thus corresponds to empirical distributions of real
eigenvalues. The second case ($a=2$) pertains to
Theorem~\ref{thm:limit-isotropic-constant} only and is treated more
conveniently by working with $\RR+$-valued measures.

\subsection{The self-adjoint case ($a=1$)}\label{sec:asymptotics.1}
In view of Table~\ref{eq:tableau}, we have $a=1$, $b = \mathrm{dim}_{\R}(\F)$ and $c=0$.  Hence, from~\eqref{eq:density}, the probability~$\PP_{n,p}$  can be written
\begin{equation}
	\PP_{n,p}(\dd\bs x)=\frac1{Z_{n,p}}\cdot\ee^{-\frac b2n^2H_{n,p}(\bs x)}\,\dd\bs x,
	\label{eq:hamiltonian}
\end{equation}
with the Hamiltonian
\[
	H_{n,p}(\bs x) \ceq\frac2
	n\sum_{i=1}^n\gamma_p
	\lvert x_i\rvert^p
	-\frac1{n^2}\sum_{i\neq j}
	\log{\left\lvert x_i-x_j\right\rvert},
\]
where~$\gamma_p$ is defined in~\eqref{eq:gamma_p}.
Heuristically this Hamiltonian is approximated as
		$n\to\infty$ by
\[		
	\cI_p(\mu) \ceq2
	\int_{\RR}\gamma_p
	\lvert x\rvert^p\,\mu(\dd x)-\iint_{\RR^2_{\neq}}\log{\left\lvert x-y\right\rvert}\,\mu(\dd x)\mu(\dd y),
\]
where $\RR^2_{\neq}\ceq\{(x,y)\in\RR^2\colon x\neq y\}$,
provided that the probability measure~$\mu$ is sufficiently close to the
empirical measure $L_{n,p}(\bs x)\ceq n^{-1}\sum_{i=1}^n\delta_{x_i}$. Thus,
because of~\eqref{eq:hamiltonian}, we expect that~$L_{n,p}$
concentrates
as $n\to\infty$ around a probability measure minimizing the functional~$\cI_p$.
This idea constitutes the cornerstone of large deviation principles for the
empirical spectral distribution of large random matrices, as originally
derived by Ben Arous and Guionnet~\cite{BenArous97}. The next two lemmas
formalize what we need. We refer to the textbooks \cite[Chapter~6]{Deift99} and
\cite[Section~2.6]{Anderson10} or
the recent article~\cite{Dupuis20} for deeper results.
\begin{lemma}[Equilibrium measure]
\label{lem:minimizer}
	There exists a unique element~$\mu_p$ minimizing the
	functional~$\cI_p$ over the space $\cP(\RR)$ of real probability measures:
	\[\inf_{\mu\in\cP(\RR)}\cI_p(\mu)=\cI_p(\mu_p).\]
	Furthermore, $\mu_p$ has the following properties:
	\begin{enumerate}[label=(\roman*)]
		\item\label{lem:minimizer.i}~$\mu_p$ is compactly supported, with
		support $\cc{-1}{1}$;
		\item\label{lem:minimizer.ii}~$\mu_p$ is absolutely continuous with respect to the Lebesgue measure,
		with density
		\[f_p(x)\ceq\frac{p\lvert x\rvert^{p-1}}{\pi}\:
		\int_{\lvert x\rvert}^1
		\frac{v^{-p}}{\sqrt{1-v^2}}\,\dd v,\qquad \lvert x\rvert\le1;\]
		\item\label{lem:minimizer.iii} $\cI_p(\mu_p)=\log2+\frac{3}{2p}$.
	\end{enumerate}
\end{lemma}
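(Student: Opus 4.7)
The plan is to apply the standard framework of weighted logarithmic potential theory to the external field $V(x)\ceq\gamma_p\lvert x\rvert^p$, decomposing the proof into three tasks: existence and uniqueness of $\mu_p$, verification that the proposed $f_p$ on $\cc{-1}{1}$ realizes $\mu_p$, and the computation of $\cI_p(\mu_p)$.

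Existence and uniqueness follow from the Saff--Totik theory, since $V$ is continuous and admissible ($V(x)-\log\lvert x\rvert\to\infty$ as $\lvert x\rvert\to\infty$ because $p\ge 1$). The unique minimizer is characterized by the Euler--Lagrange conditions: there exists $\ell\in\RR$ with
\[
2V(x)-2\int_\RR\log\lvert x-y\rvert\,\mu_p(\dd y)\ge\ell,
\]
and equality on the support of $\mu_p$. Evenness of $V$ forces $\mu_p$ to be symmetric, and convexity of $V$ ensures its support is a symmetric interval $\cc{-a}{a}$.

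To identify $a=1$ and the density, I verify that the candidate measure with density $f_p$ satisfies the Euler--Lagrange conditions on $\cc{-1}{1}$. The total mass $\int_{-1}^1 f_p=1$ reduces, after a Fubini exchange, to the elementary identity $\int_0^1\dd v/\sqrt{1-v^2}=\pi/2$. Differentiating the equality in $x$ reduces the interior condition to the singular integral equation
\[
\mathrm{p.v.}\int_{-1}^1\frac{f_p(y)}{x-y}\,\dd y=p\gamma_p\operatorname{sgn}(x)\lvert x\rvert^{p-1},\qquad x\in\oo{-1}{1}\setminus\{0\},
\]
which I establish by inserting the integral representation of $f_p$, exchanging the order of integration carefully around the principal-value singularity, and reducing to the classical Hilbert transform of $(1-y^2)^{-1/2}$ on $\cc{-1}{1}$. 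The global inequality for $\lvert x\rvert>1$ then follows by standard arguments for equilibrium measures with convex external field.

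To compute the energy, I integrate the Euler--Lagrange equality against $\mu_p$, which gives $\cI_p(\mu_p)=\int V\,\dd\mu_p+\ell/2$. A scaling argument applied to the dilation $\mu_p^{(t)}$ of $\mu_p$ by $t>0$, using that
\[
\cI_p(\mu_p^{(t)})=2\gamma_p t^p\int\lvert x\rvert^p\,\mu_p(\dd x)-\log t-\iint\log\lvert x-y\rvert\,\mu_p(\dd x)\mu_p(\dd y)
\]
must be minimal at $t=1$, yields $\int\lvert x\rvert^p\,\mu_p(\dd x)=1/(2p\gamma_p)$ and hence $\int V\,\dd\mu_p=1/(2p)$. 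Evaluating the Euler--Lagrange equality at $x=0$ then gives $\ell=-2\int_{-1}^1\log\lvert y\rvert\,f_p(y)\,\dd y$, which by Fubini reduces to the classical integrals $\int_0^1\log v/\sqrt{1-v^2}\,\dd v=-(\pi/2)\log 2$ and $\int_0^1\dd v/\sqrt{1-v^2}=\pi/2$, producing $\ell=2\log 2+2/p$ and thus $\cI_p(\mu_p)=\log 2+3/(2p)$. The main technical hurdle in this program is the verification of the singular integral equation for $f_p$: because the inner integral in $f_p$ has a moving endpoint at $\lvert y\rvert$ and the principal value is at $y=x$, the change of order must be split according to whether $v<\lvert x\rvert$ or $v>\lvert x\rvert$, and each piece requires careful handling before the classical Hilbert transform can be invoked.
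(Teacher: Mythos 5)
Your proposal is correct, but it takes a genuinely different route from the paper. The paper's proof is essentially a citation: existence and uniqueness come from Saff--Totik Theorem~I.1.3 applied to the weight $w(x)=\exp(-\gamma_p\lvert x\rvert^p)$, and all three properties (i)--(iii) -- the support $\cc{-1}{1}$, the explicit density, and the energy value -- are read off from Saff--Totik Theorem~IV.5.1 after the change of variable $v\gets\lvert x\rvert/u$. You instead reconstruct the standard derivation: you take existence/uniqueness and the Euler--Lagrange characterization from the general theory, and then \emph{verify} that the candidate $f_p$ satisfies the normalization and the singular integral equation $\mathrm{p.v.}\int f_p(y)(x-y)^{-1}\,\dd y=p\gamma_p\operatorname{sgn}(x)\lvert x\rvert^{p-1}$, which is exactly the content of the cited Saff--Totik theorem. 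Your computations for (iii) are correct and rather elegant: the dilation argument gives $\int\gamma_p\lvert x\rvert^p\,\dd\mu_p=1/(2p)$, and evaluating the Euler--Lagrange equality at $x=0$ together with $\int_{-1}^1\log\lvert y\rvert\,f_p(y)\,\dd y=-\log2-1/p$ yields $\ell=2\log2+2/p$ and hence $\cI_p(\mu_p)=\log2+3/(2p)$. What your approach buys is self-containedness and an explicit record of where each number comes from; what it costs is the unexecuted verification of the Hilbert-transform identity (which you rightly flag as the technical hurdle -- it is classical, due to Ullman, and your sketch of splitting at $v\lessgtr\lvert x\rvert$ is the right one) and the inequality off the support, needed to invoke the converse characterization of the equilibrium measure. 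For that last point, note that the required monotonicity is exactly the positivity of $\zeta_p'(x)=px^{p-1}\int_1^x v^{-p}(v^2-1)^{-1/2}\,\dd v$ for $x>1$, which the paper quotes from Saff--Totik in Section~\ref{sec:proof-fluct}; so your ``standard arguments'' claim is legitimate but should be made explicit in a full write-up.
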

\begin{proof}
	The existence and uniqueness of the minimizer~$\mu_p$
	follows from \cite[Theorem~I.1.3]{Saff97} with the weight function
	$w\colon x\mapsto\exp(-\gamma_p\lvert x\rvert^p)$ on~$\RR$. Properties (i)--(iii)
	are provided by \cite[Theorem~IV.5.1]{Saff97}. We have rewritten the density
	with the change of variable $v\gets\lvert x\rvert/u$.
\end{proof}
\begin{lemma}[Convergence towards the equilibrium measure]
\label{lem:meanconv}
Let~$L_{n,p}$ be the empirical measure of~$\bs x$ drawn from the probability~$\PP_{n,p}$ defined in~\eqref{eq:hamiltonian}. Then:
	\begin{enumerate}[label=(\roman*)]
		\item\label{lem:meanconv.i} For every continuous, polynomially bounded functions $f,g\colon\RR\to\RR$,
	it holds that
	\[\lim_{n\to\infty}\Esp g\left(\langle L_{n,p},f\rangle\right)=g(\langle\mu_p,f\rangle).\]
    \item\label{lem:meanconv.ii} We have
	\[{\lim_{n\to\infty}{-\frac1{d_n}}\log{Z_{n,p}}}=\log2+\frac{3}{2p},\]
	where the normalizing constant~$Z_{n,p}$ is defined in~\eqref{eq:hamiltonian}.
	\end{enumerate}
\end{lemma}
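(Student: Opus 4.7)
The plan is to invoke classical large-deviation results for $\beta$-ensembles: the free-energy limit from the LDP of Ben Arous and Guionnet~\cite{BenArous97}, together with Lemma~\ref{lem:minimizer}, handles (ii), while the weak convergence of $L_{n,p}$ towards $\mu_p$, combined with the truncation from Corollary~\ref{cor:troncature}, handles (i).

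For (ii), formula~\eqref{eq:hamiltonian} casts $\PP_{n,p}$ in the canonical $\beta$-ensemble form with potential $V(x) = 2\gamma_p\lvert x\rvert^p$ and inverse temperature $\beta=b$. The potential is continuous and confining ($V(x)/\log\lvert x\rvert\to\infty$ as $\lvert x\rvert\to\infty$), so the classical free-energy limit (see \cite[Theorem~2.6.1]{Anderson10}, and also~\cite{BenArous97,Johansson98,Hiai00}) yields
\[
\lim_{n\to\infty}\:-\frac{2}{bn^2}\log Z_{n,p}
= \inf_{\mu\in\cP(\RR)}\cI_p(\mu)
= \cI_p(\mu_p)
= \log 2 + \frac{3}{2p},
\]
where the middle equality is the definition of~$\mu_p$ and the last is Lemma~\subref{lem:minimizer}{iii}. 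Since $d_n\sim bn^2/2$ as $n\to\infty$, dividing by $d_n$ rather than $bn^2/2$ produces the same limit, proving~(ii).

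For (i), the same LDP implies that $L_{n,p}$ converges weakly in probability to the deterministic measure~$\mu_p$; in particular, for every bounded continuous $h\colon\RR\to\RR$, $\langle L_{n,p},h\rangle$ converges in probability to $\langle\mu_p,h\rangle$. Fix $B\ge X_1$ with $X_1\ge 1$ from Theorem~\ref{thm:moment-estimate}, so that $\operatorname{supp}(\mu_p)=\cc{-1}{1}\subset\cc{-B}{B}$ (Lemma~\subref{lem:minimizer}{i}), and let $\phi$ be the associated cutoff from Corollary~\ref{cor:troncature}. Then $f\phi$ is bounded continuous, hence $\langle L_{n,p},f\phi\rangle$ converges in probability to $\langle\mu_p,f\phi\rangle=\langle\mu_p,f\rangle$, the equality because $\phi\equiv 1$ on $\operatorname{supp}(\mu_p)$. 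Since~$g$ is continuous and $\lvert\langle L_{n,p},f\phi\rangle\rvert\le\|f\phi\|_\infty$ is a deterministic bound, the continuous mapping theorem and bounded convergence give
\[
\lim_{n\to\infty}\Esp{g(\langle L_{n,p},f\phi\rangle)} = g(\langle\mu_p,f\rangle).
\]
Finally, Corollary~\ref{cor:troncature} yields $\lvert\Esp{g(\langle L_{n,p},f\rangle)}-\Esp{g(\langle L_{n,p},f\phi\rangle)}\rvert=O(\alpha^n)\to 0$, which combined with the above display proves~(i).

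The only (essentially technical) obstacle is to invoke references whose hypotheses cover the potential $V(x)=2\gamma_p\lvert x\rvert^p$ for \emph{arbitrary} $p\ge1$, not merely for $p$ an even integer as in the seminal~\cite{Johansson98}. Since this~$V$ is continuous and superlogarithmic, both the weak convergence of empirical measures and the free-energy convergence follow from the general LDP framework in~\cite[Section~2.6]{Anderson10}. All the fine properties of the minimizer~$\mu_p$ required above are already furnished by Lemma~\ref{lem:minimizer}.
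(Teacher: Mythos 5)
Your proof is correct, and it follows the same skeleton as the paper's (truncate $f$ to $f\phi$ via Corollary~\ref{cor:troncature}, pass to the limit for the bounded statistic, control the remainder), but it swaps out the key external inputs. For part~(i), the paper invokes Johansson's exponential moment estimate \cite[Theorem~2.1]{Johansson98} for the deviation $\Delta_{n,p}(f)=\langle L_{n,p},f\rangle-\langle\mu_p,f\rangle$, then combines Markov's inequality, Borel--Cantelli and dominated convergence; you instead extract convergence in probability of $L_{n,p}$ towards $\mu_p$ from the large deviation principle of \cite{BenArous97,Anderson10} and conclude by the continuous mapping theorem and bounded convergence. This is exactly the route the paper itself takes in the case $a=2$ (Lemma~\ref{lem:meanconv2}, via the Hiai--Petz LDP), so it is certainly admissible here; it is slightly more qualitative (no rate), which costs nothing since only convergence of expectations is needed. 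For part~(ii), you replace the citation of \cite[Corollary~4.3]{Johansson98} by the general free-energy limit $\frac1{n^2}\log Z_{n,p}\to-\frac b2\,\cI_p(\mu_p)$ from the LDP framework; your bookkeeping of the normalization ($V=2\gamma_p h_p$ against the prefactor $\frac b2 n^2$, and $d_n\sim bn^2/2$) is right, and the hypotheses of the cited results (continuity and super-logarithmic growth of $\lvert x\rvert^p$) are indeed met for all $p\ge1$, so the concern you raise about non-even $p$ is correctly dispatched. The identification $\langle\mu_p,f\phi\rangle=\langle\mu_p,f\rangle$ via $\operatorname{supp}\mu_p=\cc{-1}{1}\subset\cc{-B}{B}$ is also handled properly. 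In short: a valid proof, trading the paper's quantitative concentration input for the qualitative LDP consequence.
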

\begin{proof}
	(i)\enspace
By
	Corollary~\ref{cor:troncature},
	we may reduce to~$f$ bounded. Thus the random variable
	$g(\langle L_{n,p},f\rangle)$ is uniformly bounded, and letting
	$\Delta_{n,p}(f)\ceq\langle L_{n,p},f\rangle-\langle\mu_p,f\rangle$ we
	see from the Borel-Cantelli lemma and the dominated convergence theorem
	that it suffices to show that
	\[\sum_{n=1}^\infty\prb{\lvert\Delta_{n,p}(f)\rvert>\varepsilon_n}<\infty,\]
	for some null sequence~$(\eps_n)$. Thanks to \cite[Theorem~2.1]{Johansson98}, we have
	that
	\[\alpha_n\ceq\frac1n\log\esp{\exp\bigl(n\lvert \Delta_{n,p}(f)\rvert\bigr)}
	\xrightarrow[n\to\infty]{}0.\]
	Choosing now~$\eps_n$ as, e.g., $\eps_n\ceq\alpha_n/2+(\log n)/n$, we see from
	Markov's inequality that
	\[\PP\left(\lvert\Delta_{n,p}(f)\rvert>2\eps_n\right)=
	\PP\left(\exp(n\lvert\Delta_{n,p}(f)\rvert)>\exp(2n\eps_n)\right)
	=O\left(e^{-(2\eps_n-\alpha_n)n}\right)=O\left(\frac1{n^2}\right),\]
	and the first point of the lemma is proved.

    \noindent
	(ii)\enspace This point follows from~\cite[Corollary~4.3]{Johansson98}
	and Lemma~\subref{lem:minimizer}{iii}.
\end{proof}

We now have all the ingredients to give a simple proof of Theorem~\ref{thm:limit-isotropic-constant}
in the case $a=1$.
\begin{proof}[Proof of Theorem~\ref*{thm:limit-isotropic-constant}, case $a=1$]
We start by recalling~\eqref{eq:thm1}: as $n\to\infty$,
\[
	\frac{1}{\sqrt {d_n}}\,I_q(\UBE)
	\sim
	\frac{\ee^{-\frac1p-\frac34}}{\sqrt{2\pi}}\, Z_{n,p}^{-1/d_n} \,
	\left(\Esp{ \langle L_{n,p},h_2\rangle^{\frac q2}}\right)^{\!\frac1q}.
\]
By Lemma~\ref{lem:meanconv}, we have
$Z_{n,p}^{-1/d_n}\to2\ee^{\frac3{2p}}$ and
 \[
 \lim_{n\to\infty}\Esp{\langle L_{n,p},h_2\phi\rangle^{\frac q2}}
 =
\langle\mu_p,h_2\phi\rangle^{\frac q2}.
\]
Plugging in the value of $\langle\mu_p,h_2\rangle$
computed in Lemma~\ref{lem:moment-h2},
we conclude that
\[
\lim_{n\to\infty}\frac{1}{\sqrt {d_n}}\,I_q(\UBE)
=\ee^{\frac{1}{2p}-\frac34}\sqrt{\frac{p}{\pi(p+2)}}.\qedhere
\]
\end{proof}

The proof of Lemma~\ref{lem:meanconv} informs us rather poorly on the
speed of convergence of the linear statistics $\langle L_{n,p},f\rangle$
towards $\langle\mu_p,f\rangle$.
In fact, a necessary condition for Theorem~\ref{thm:moment-q} to
hold is
\[\Var{\langle L_{n,p},h_2\rangle}=O\left(\frac1{n^2}\right),\]
so we expect
that $\langle L_{n,p},h_2\rangle-\langle\mu_p,h_2\rangle=O(n^{-1})$.
Besides, to identify the actual limit in Theorem~\ref{thm:moment-q},
we need to understand precisely the asymptotic behavior of the fluctuations
$n(\langle L_{n,p},h_2\rangle-\langle\mu_p,h_2\rangle)$.

There has been an increasing wealth of literature on such fluctuations
of linear statistics. Given a general potential $V$, let
$L_n\ceq n^{-1}\sum_{i=1}^n\delta_{x_i}$ be the empirical distribution associated with an ensemble $\bs x\ceq(x_1,\ldots,x_n)\in\RR^n$ of particles which are
subject to the confining potential~$V$ and pairwise repulsive logarithmic
interaction. Then, the convergence to a Gaussian distribution
as $n\to\infty$ of the random variable
\begin{equation}
	F_n(\xi)\ceq n\Bigl(\langle L_n,\xi\rangle - \langle\mu_V,\xi\rangle\Bigr),
	\label{eq:fluct}
\end{equation}
where~$\xi$ is a given test function and $\mu_V$ is the equilibrium distribution, has been widely studied. The regularity of the external potential $V$ plays a prominent role. It was assumed to be a polynomial of even degree with positive leading coefficient in the seminal paper of Johansson~\cite{Johansson98}. This condition has then been relaxed during the last two decades to include
real-analytic potentials~\cite{Kriecherbauer10,Shcherbina13,Borot13} and, more
recently, potentials of class~$\cC^{r}$ with~$r$ reasonably
large~\cite{Bekerman15,Bekerman18,Lambert19}.

In our setting (specifically, of Theorem~\ref{thm:moment-q}), the potential
is $V=V_p\ceq2\gamma_ph_p$ with a lack of regularity at $0$, the $\beta$-ensemble~$\bs x$ has the
distribution~$\PP_{n,p}$ with $a=1$, $b=\beta$, and $c=0$
(cf.~\eqref{eq:hamiltonian}), and the equilibrium measure~$\mu_V$ is of course
the probability distribution~$\mu_p$ of Lemmas~\ref{lem:minimizer}
and~\ref{lem:meanconv}. We only need to establish the central limit theorem for $\xi=h_2$,
that is the convergence of $F_n(h_2)$. As~$V_p$ is not always a polynomial nor
real-analytic, we choose to work with the currently most general version due to
Bekerman, Leblé and Serfaty~\cite{Bekerman18}. Like in Johansson~\cite{Johansson98},
the central limit theorem is obtained by establishing convergence of the moment
generating function (a.k.a.\ Laplace transform) of~\eqref{eq:fluct}.

\begin{proposition}[Fluctuations of the linear statistics]
\label{pro:fluct}
	Let $p\in\oo{3}{\infty}$ and let \[F_{n,p}\ceq n\Bigl(\langle L_{n,p},h_2\phi\rangle-\langle\mu_p,h_2\rangle\Bigr),\]
	with~$\phi$ being the truncation function of Corollary~\ref{cor:troncature}.
	Then $(F_{n,p}^2)^{\phantom{2}}_{n\ge1}$ is uniformly integrable, i.e.,
	\[\underset{K\to\infty}{\operatorname{lim\vphantom{p}}}
	\limsup_{\vphantom{K}n\to\infty}\:\Esp{F_{n,p}^2\ind{\{F_{n,p}^2>K\}}}=0.\]
	Furthermore, $\lim\limits_{n\to\infty}\Var F_{n,p}=\frac1{4b}$.
\end{proposition}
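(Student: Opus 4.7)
The plan is to invoke the central limit theorem for fluctuations of $\beta$-ensembles of Bekerman, Leblé and Serfaty~\cite{Bekerman18}, applied to the probability $\PP_{n,p}$ with external potential $V_p(x)\ceq 2\gamma_p|x|^p$ and smooth compactly supported test function $\xi\ceq h_2\phi$. The truncation $\phi$ is essentially free: by Corollary~\ref{cor:troncature}, replacing $h_2$ by $h_2\phi$ in any polynomially bounded observable of the linear statistic costs only an $O(\alpha^n)$ error for some $\alpha<1$, so it suffices to establish the uniform integrability and variance convergence for the truncated quantity $F_{n,p}$.

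First, I would verify that the hypotheses of~\cite{Bekerman18} are satisfied in our setting. Two ingredients are required: sufficient regularity of the potential $V_p$, and sufficient regularity of the solution $\psi$ to a master transport equation of the form $\mathcal L\psi=\xi-\langle\mu_p,\xi\rangle$ on the support of $\mu_p$, where $\mathcal L$ is an integro-differential operator determined by $\mu_p$ and $V_p$. For $p>3$, the potential $V_p$ is of class $C^3$ on $\RR$ (only its $p$-th derivative may fail at the origin), which meets the regularity threshold imposed in~\cite{Bekerman18}. The subtler point is the regularity of $\psi$: using the explicit form of the equilibrium density $f_p$ from Lemma~\ref{lem:minimizer} and a careful analysis of the master equation near the endpoints $\pm1$ of the support and near the singular point $0$ of $V_p$, this regularity is established in Section~\ref{sec:properties}.

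Second, once the CLT of~\cite{Bekerman18} applies, its proof in fact produces convergence of the Laplace transform $s\mapsto\EE[\exp(sF_{n,p})]$ on a neighborhood of $0$ to the Laplace transform of a centered Gaussian with an explicit variance $\sigma^2$. Uniform exponential integrability of $F_{n,p}$ on such a neighborhood is a standard hypothesis that immediately implies uniform integrability of $(F_{n,p}^{2})_{n\ge1}$, and combined with the convergence in distribution it yields convergence of all moments, in particular $\Var F_{n,p}\to\sigma^2$. To identify $\sigma^2=\frac1{4b}$, I would then specialize the asymptotic variance formula from~\cite{Bekerman18} to $\xi=h_2$, which, after integration by parts against $f_p$ on $[-1,1]$, reduces to a one-variable computation depending only on the moments of $\mu_p$ and on $\beta=b$.

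The main obstacle is the regularity check for the master equation. The density $f_p$ vanishes to order $p-1$ at the origin and has a square-root singularity at the endpoints $\pm1$, and the coefficients of the transport equation $\mathcal L\psi=\xi-\langle\mu_p,\xi\rangle$ inherit these features. Showing that $\psi$ is smooth enough for the CLT machinery of~\cite{Bekerman18} to apply, even for a test function as tame as $h_2$, is the delicate analysis that forces the assumption $p>3$ and is the content postponed to Section~\ref{sec:properties}; everything else reduces to quoting the main theorem of~\cite{Bekerman18} and performing an explicit moment computation against the density~$f_p$.
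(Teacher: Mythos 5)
Your overall plan — invoke the Bekerman--Lebl\'e--Serfaty CLT, deduce convergence of the Laplace transform of $F_{n,p}$ on a neighborhood of the origin (hence uniform integrability of $F_{n,p}^2$ and convergence of all moments), and compute the limiting variance from the explicit variance functional — is the same as the paper's. However, there is a genuine gap in your justification of why $p>3$ is the right threshold. You claim that for $p>3$ the potential $V_p\propto|x|^p$ is $\cC^3$ and that this ``meets the regularity threshold imposed in~\cite{Bekerman18}.'' That is false: the hypotheses of \cite[Theorem~1]{Bekerman18} require the potential to be of class $\cC^6$, which in our situation means $p\ge6$ (this is precisely the content of the remark following Proposition~\ref{pro:fluct} in the paper). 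So a black-box quotation of the theorem would only give the result for $p\ge6$, not $p>3$.

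The paper's actual argument is different in exactly this respect: it \emph{does not} apply~\cite{Bekerman18} as a black box, but rather reproduces the perturbative change of variables $\vartheta_t=\operatorname{Id}+t\psi_p$ and traces through the Taylor expansion of the perturbed Hamiltonian, identifying that the \emph{only} place where regularity is genuinely needed is to control the anisotropy term $\mathsf A_n[\psi_p]$ via the analogue of \cite[Proposition~5.4]{Bekerman18}, and that step requires only $\psi_p\in\cC^3$ rather than $V_p\in\cC^6$. The paper then derives the explicit expression~\eqref{eq:exprpsi2} for~$\psi_p$ in terms of the equilibrium density and proves in Lemma~\ref{lem:regularity-psi} that $\psi_p\in\cC^{\lceil p\rceil-1}$, so $\psi_p\in\cC^3$ exactly when $p>3$. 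You correctly identify that the regularity of $\psi$ near the endpoints and near $0$ is the delicate step, but you need to also recognize that this is \emph{why} the argument cannot simply cite the published theorem: the whole point is that the explicit $\psi_p$ is more regular than the general theory would guarantee from $V_p$ alone, and one must open the proof to exploit this. Once that is fixed, your reduction of uniform integrability and variance convergence to Laplace transform convergence, and the identification $\sigma^2=1/(4b)$ (which in the paper is Lemma~\ref{lem:variance-fluct}, a direct double integral against the Arcsine density), are sound.
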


\begin{remark}
When $p \ge 6$, this proposition is a direct consequence of \cite[Theorem~1]{Bekerman18}.
Indeed, we are in the so called ``one cut'' regime which corresponds to 
$\mathsf m=\mathsf n=\mathsf k=0$, and we remark that $\xi = h_2 \phi$ is $\cC^\infty$ and $V = V_p$ is $\cC^6$ when $p \ge 6$.

Observe that for $p \ge 8$, this is also a direct consequence of the CLT
 of Lambert, Ledoux and Webb \cite[Theorem~1.2]{Lambert19}, which they derived
alternatively using Stein's method.
\end{remark}
\begin{proof}
	To explain why it is enough to assume $p>3$, and for the sake of clarity, we reproduce the key arguments of Bekerman, Leblé and Serfaty~\cite{Bekerman18}, introducing in particular
	 the key master equation~\eqref{eq:sol-master}, and underlining where the regularity of its solution is required. 
	
	The goal is to prove that, when $p\in\oo{3}{\infty}$, the moment generating function of~$F_{n,p}$
	converges to that of a Gaussian variable~$N$ with variance~$1/4b$, that is,
	there exists $m_p\ceq m_p(h_2\phi)\in\RR$ such that
	\begin{equation}
		\lim_{n\to\infty}\Esp{\ee^{sF_{n,p}}}
		= \exp\left(sm_p+\frac{s^2}{2}\left(\frac1{4b}\right)\!\right)
		\label{eq:cv-laplace}
	\end{equation}
	holds for all $s\in\RR$ with~$\lvert s\rvert$ sufficiently small. 
	It entails the
	convergence in distribution of~$F_{n,p}$ towards $N\sim\cN(m_p,\frac1{4b})$,
	together with the convergence of all moments
	$\Esp F_{n,p}^k\to\EE N^k$, $k\in\NN$.
	In particular, $(F_{n,p}^2)^{\phantom{2}}_{n\ge1}$ will be uniformly
	integrable and $\lim_{n \to +\infty} \Var F_{n,p} = 1/4b.$.

First, by
Theorem~\subref{thm:moment-estimate}{iii}
and union bound, we can choose $B>1$ sufficiently large such that
$\PP_{n,p}(\exists i\le n:\lvert x_i\rvert\ge B)\le C'n\ee^{-c'nB^p}$ for
some constants $c',C'>0$.
Observing that $\|F_{n,p}(h_2\phi)\|_\infty=O(n)$ we deduce that
as soon as~$\lvert s\rvert$ is sufficiently small,
\[\lim_{n\to\infty}\Enp\ee^{sF_{n,p}}\!\ind{\{\exists i\le n:\lvert 
	x_i\rvert\ge B\}}=0.\]
Thus, we may restrict to $\bs x\in U_0^n$ with $U_0\ceq\oo{-B}{B}$.
Now, the strategy, adopted in~\cite{Bekerman18} and developed already
in~\cite{Johansson98}, is to perturb the Hamiltonian~$H_{n,p}$ in~\eqref{eq:hamiltonian} as follows:
\begin{equation*}
	H_{n,p}^t(\bs x)\ceq H_{n,p}(\bs x)+\frac tn\sum_{i=1}^nx_i^2
	=\frac1n\sum_{i=1}^n\left(V_p(x_i)+tx_i^2\right)-\frac1{n^2}\sum_{1\le i\neq j\le n}\!\!\!\!\log{\lvert x_i-x_j\rvert},
\end{equation*}
where we have set $t\ceq-\frac{2s}{bn}$. Then, we write
\begin{equation*}
	\Enp\ee^{sF_{n,p}}
	=o(1)+\ee^{-sn\langle\mu_p,h_2\rangle}\Enp\ee^{sn\langle L_{n,p},h_2\rangle}\!\ind{U_0^n}
	=o(1)+\frac{\ee^{-sn\langle\mu_p,h_2\rangle}}{Z_{n,p}}\int_{U_0^n}\ee^{-\frac b2n^2H_{n,p}^t(\bs x)}\dd\bs x.
\end{equation*}
Next, one applies in the last integral a change of variables
$x_i\gets\vartheta_t(y_i)$ for $1\le i\le n$ and
for some well-chosen~$\cC^1$-diffeomorphism $\vartheta_t\colon U_t\to U_0$
depending on~$t$. We get
\begin{align*}
	\Enp\ee^{sF_{n,p}}
	&=o(1)+\frac{\ee^{-sn\langle\mu_p,h_2\rangle}}{Z_{n,p}}
	\int_{U_t^n}\ee^{-\frac b2n^2H_{n,p}^t\circ\vartheta_t(\bs y)}\prod_{i=1}^n\lvert \vartheta_t'(y_i)\rvert\,\dd\bs y\\[.4em]
	&=o(1)+\ee^{-sn\langle\mu_p,h_2\rangle}\Enp\ee^{-\frac b2n^2(H_{n,p}^t\circ\vartheta_t-H_{n,p})+n\langle L_{n,p},\log{\lvert\vartheta_t'\rvert}\rangle}\!\ind{U_t^n}.
\end{align*}
The idea is that, for a judicious choice of~$\vartheta_t$, the exponent
in the last expectation becomes simple enough to establish the convergence
towards the moment generating function of a Gaussian distribution.
The diffeomorphism is chosen as $\vartheta_t(y)\ceq y+t\psi_p(y)$
where, in our setting, $\psi_p\colon\RR\to\RR$
is the solution to the equation
\begin{equation}
	\Xi_p\psi=\frac12h_2+c_{h_2}\label{eq:sol-master}
\end{equation}
for some constant~$c_{h_2}$, where~$\Xi_p$ is the so called ``master operator''
acting on~$\cC^1$ functions and defined for $x\in\R$ by
\[\Xi_p\psi(x)=-\frac12
V'_p(x)\psi(x)+\int\frac{\psi(x)-\psi(y)}{x-y}\,\mu_p(\dd y).\]
We show in Section~\ref{sec:proof-fluct} that~$\psi_p$ is an odd function, we
give its explicit expression and check that it is of
class~$\cC^{\lceil p\rceil -1}\subset\cC^1$.
Then, by the local
inversion theorem, $\vartheta_t=\operatorname{Id}+t\psi_p$ becomes a
$\cC^1$-diffeomorphism
$U_t\to U_0$ provided $|t|=2|s|/bn\le\tau$ is sufficiently small (equivalently, $n$
is sufficiently large), and we have
\begin{equation}
	\Enp\ee^{sF_{n,p}}
	=o(1)+\ee^{-sn\langle\mu_p,h_2\rangle}\Esp\ee^{-\frac b2\,n^2\left(H_{n,p}^t\circ\vartheta_t-H_{n,p}\right)
		+n\langle L_{n,p},\log{(1+t\psi_p')}\rangle}\ind{U_t^n},\label{eq:expr-fluct}
\end{equation}
where $U_t\ceq\oo{-A_t}{A_t}$ and $\vartheta_t(A_t)=A_t+t\psi_p(A_t)=B$. Furthermore, we know from the implicit function theorem
that $A_t,\,\lvert t\rvert\le\tau,$ depends
continuously on~$t$, and thus choosing~$\tau$ small enough we can assume that
$1<\inf_{\lvert t\rvert\le\tau}A_t<\sup_{\lvert t\rvert\le \tau}A_t<\infty$.

The remaining of the proof is to Taylor-expand as $t\to0$ (i.e., $n\to\infty$) the
terms $H_{n,p}^t\circ\vartheta_t-H_{n,p}$
and $\log{(1+t\psi_p')}$ appearing
in the right-hand side of~\eqref{eq:expr-fluct} up to the order $O(t^3)$.
After a rather lengthy but not difficult calculation
(see \cite[Section~4]{Bekerman18}),
and using that~$\psi_p$ solves~\eqref{eq:sol-master},
we obtain that
\begin{equation}
	\Enp\ee^{sF_{n,p}}=o(1)
	+\ee^{sm_p+\frac{s^2}2\Sigma^2(h_2)}
	\Esp\exp\Bigl\{-\frac sn\mathsf A_n[\psi_p]+O(nt^2)+O(n^2t^3)\Bigr\}\ind{U_t^n},\label{eq:taylor-expansion}
\end{equation}
where $O(nt^2)$ and $O(n^2t^3)$ are random quantities converging uniformly
to~$0$ as $n\to\infty$ (recall that $t\ceq-\frac{2s}{bn}$),
$m_p\ceq\langle\mu_p,\psi_p'\rangle$ is the limiting mean,
\[
	\Sigma^2(h_2)
\ceq\frac1{2b\pi^2}\int_{-1}^1\int_{-1}^1
\frac{(x+y)^2(1-xy)}{\sqrt{1-x^2}\sqrt{1-y^2}}\,\dd x\dd y\]
is the limiting variance,
and, lastly,
\[\mathsf A_n[\psi_p]\ceq n^2\iint\frac{\psi_p(x)-\psi_p(y)}{x-y}(L_{n,p}-\mu_p)(\dd x)(L_{n,p}-\mu_p)(\dd y)\]
is the so called \textit{anisotropy term}. 
A careful
inspection of the proof of \cite[Proposition~5.4]{Bekerman18} shows that, for~$\lvert s\rvert$ sufficiently
small,
\begin{equation}
	\lim_{n\to\infty}\log\Esp\exp\Bigl\{-\frac sn\mathsf A_n[\psi_p]\Bigr\}=0
\label{eq:anisotropyterm}
\end{equation}
holds provided~$\psi_p$ is of class~$\cC^3$.
This is true as soon as $p>3$; we postpone the proof of this key technical point to Section~\ref{sec:proof-fluct}, see Lemma~\ref{lem:regularity-psi}.
Note that~\eqref{eq:anisotropyterm} is the only place where
the hypothesis $p>3$ is used.
It then follows by the Cauchy-Schwarz inequality that the expectation
in the right-hand side
of~\eqref{eq:taylor-expansion} tends to~$1$.
As the value of $\Sigma^2(h_2)$ simplifies to~$1/4b$ thanks to
Lemma~\ref{lem:variance-fluct}, this
establishes~\eqref{eq:cv-laplace}. 
\end{proof}

We can now give a proof of Theorem~\ref{thm:moment-q}.
\begin{proof}[Proof of Theorem~\ref*{thm:moment-q}]
Recall the earlier computation~\eqref{eq:thm2}
(with $a=1$): as $n\to\infty$,
\begin{equation}\frac{I_q(\UBE)}{I_2(\UBE)}
	=\left(1 - \frac{q-2}{b p n^2} + o\left(\frac1{n^2}\right)\!\right)
	\frac{\left(\Esp{\langle L_{n,p},h_2\rangle^{\sfrac q2}}\right)^{\sfrac1q}}{\left(\Esp{\langle L_{n,p},h_2\rangle}\right)^{\sfrac12}}.\label{eq:first-exp-moment-q}
\end{equation}
We must expand both the numerator and denominator of that latter fraction.
By
Corollary~\ref{cor:troncature}, referring to the truncation function~$\phi$
there, we may replace~$h_2$ by~$h_2\phi$ as this only induces a $o(n^{-2})$ error:
\begin{equation}
	\frac{\Esp{\langle L_{n,p},h_2\rangle^{\sfrac q2}}}{\left(\Esp{\langle L_{n,p},h_2\rangle}\right)^{\sfrac q2}}
=
\frac{\Esp{\langle L_{n,p},h_2\phi\rangle^{\sfrac q2}}+o(\frac1{n^2})}{\left(\Esp{\langle L_{n,p},h_2\phi\rangle}+o(\frac1{n^2})\right)^{\sfrac q2}}.\label{eq:truncatedfraction}
\end{equation}
Applying Taylor-Lagrange's formula to the function $x\mapsto x^{\sfrac q2}$
between
$x_0\ceq\langle\mu_p,h_2\rangle>0$ and $x_0+h$ yields
\begin{align*}
	\left\lvert(x_0+h)^{\frac q2}-x_0^{q/2}-\frac q2h-\frac{q(q-2)}8h^2\right\rvert
	\ind{\{\lvert h\rvert\le\frac{x_0}2\}}&\le C_{q,x_0}\,\lvert h\rvert^3,
\intertext{for some constant~$C_{q,x_0}$
    depending on~$q$ and~$x_0$. Up to enlarging~$C_{q,x_0}$, we also have}
\left\lvert x_0^{q/2}+\frac q2h+\frac{q(q-2)}8h^2\right\rvert
\ind{\{\lvert h\rvert>\frac{x_0}2\}}
&\le C_{q,x_0}\,h^2\ind{\{\lvert h\rvert>\frac{x_0}2\}}.
\end{align*}
We write $\langle L_{n,p},h_2\phi\rangle=x_0+h$ with $h\ceq\frac1nF_{n,p}=\langle L_{n,p},h_2\phi\rangle-\langle\mu_p,h_2\rangle$ and note that
$\lvert h\rvert\le\|h_2\phi\|_\infty+x_0$.
Then, on the one hand, using the above inequalities according to whether $\lvert h\rvert\le x_0/2$
or
$\lvert h\rvert>x_0/2$ gives
\begin{align}
	&n^2\,\left\lvert\Esp{\langle L_{n,p},h_2\phi\rangle^{\frac q2}\ind{\{\lvert F_{n,p}\rvert\le\frac{nx_0}2\}}}
-\langle\mu_p,h_2\rangle^{\frac q2}-\frac q{2n}\Esp{ F_{n,p}}-\frac{q(q-2)}{8n^2}\Esp{F_{n,p}^2}
\right\rvert\notag\\[.4em]
	&\qquad\le C_{q,x_0}\left(\frac{\Esp{\lvert F_{n,p}\rvert^3}}n
	+\esp{F_{n,p}^2\ind{\{\lvert F_{n,p}\rvert>\frac{nx_0}2\}}}\right)\notag\\[.4em]
	&\qquad\le C_{q,x_0}\left(\frac{K^{3/2}}n
	+\left(\|h_2\phi\|_\infty+x_0\right)\Esp{F_{n,p}^2\ind{\{F_{n,p}^2>K\}}}+\esp{F_{n,p}^2\ind{\{\lvert F_{n,p}\rvert>\frac{nx_0}2\}}}\right)\notag\\[.4em]
	&\qquad\le C_{q,x_0}\left(\frac{K^{3/2}}n
	+\left(\|h_2\phi\|_\infty+x_0+1\right)\Esp{F_{n,p}^2\ind{\{F_{n,p}^2>K\}}}\right),\label{eq:hsmall}
\end{align}
for any $0<K\le n^2x_0^2/4$.
On the other hand, for such~$n$ and~$K$,
\begin{equation}
	n^2\Esp{\langle L_{n,p},h_2\phi\rangle^{\frac q2}\ind{\{\lvert F_{n,p}\rvert>\frac{nx_0}2\}}}
\le\frac4{x_0^2}\|h_2\phi\|_\infty^{q/2}\Esp{F_{n,p}^2\ind{\{F_{n,p}^2>K\}}}.
\label{eq:hbig}
\end{equation}
Proposition~\ref{pro:fluct} tells us that the right-hand sides of~\eqref{eq:hsmall}
and~\eqref{eq:hbig} vanish (letting $n\to\infty$ first then $K\to\infty$),
and that $\Var F_{n,p}\to\frac1{4b}$ as $n\to\infty$.
Hence, by triangle inequality
\[\Esp{\langle L_{n,p},h_2\phi\rangle^{\frac q2}}=
\langle\mu_p,h_2\rangle^{\frac q2}+\frac q{2n}\Esp{F_{n,p}}+\frac{q(q-2)}{8n^2}\Esp{F_{n,p}^2}
+o\left(\frac1{n^2}\right).\]
This holds in particular when $q=2$, and going back to~\eqref{eq:truncatedfraction}
we get
\begin{align*}
\frac{\Esp{\langle L_{n,p},h_2\rangle^{\sfrac q2}}}{\left(\Esp{\langle L_{n,p},h_2\rangle}\right)^{\sfrac q2}}
&=
\frac{\Esp{\langle L_{n,p},h_2\phi\rangle^{\sfrac q2}}+o(\frac1{n^2})}{\left(\Esp{\langle L_{n,p},h_2\phi\rangle}+o(\frac1{n^2})\right)^{\sfrac q2}}\\[.4em]
&=\frac{\langle\mu_p,h_2\rangle^{\frac q2}+\frac q{2n}\Esp{F_{n,p}}
+\frac{q(q-2)}{8n^2}\Esp{F_{n,p}^2}+o\left(\frac1{n^2}\right)}{
\langle\mu_p,h_2\rangle^{\frac q2}+\frac q{2n}\Esp{F_{n,p}}+\frac{q(q-2)}{8n^2}(\Esp{F_{n,p}})^2
+o\left(\frac1{n^2}\right)}\\[.4em]
&=1+\frac{q(q-2)}{8\langle\mu_p,h_2\rangle}\cdot\frac{\Var{F_{n,p}}}{ n^2}+o\left(\frac1{n^2}\right)\\[.4em]
&=1+\frac{q(q-2)(p+2)^2}{8bp^2\,n^2}+o\left(\frac1{n^2}\right),
\end{align*}
where we also replaced the value of $\langle\mu_p,h_2\rangle$ computed in
Lemma~\ref{lem:moment-h2}.
Raising this expansion to the power~$1/q$ and returning
to~\eqref{eq:first-exp-moment-q}, we obtain
\[\frac{I_q(\UBE)}{I_2(\UBE)}
=1+\frac{(q-2)(p-2)^2}{16p^2\,d_n}+o\left(\frac1{d_n}\right),\]
because $d_n\sim bn^2/2$ (for $a=1$).
\end{proof}

\subsection{The case $E=\cM_n(\F)$ ($a=2$)}
\label{sec:non-symmetric}
We suppose in this section that $a=2$. We reduce to measures and integrals over~$\RR+$
by performing the change of variables
$y_i\ceq\lvert x_i\rvert^2$ for all $1\le i\le n$.
Specifically, the pushforward of~$\PP_{n,p}$ by the map
$\bs x\in\RR^n\mapsto\bs y\in(\RR+)^n$ is the probability%
\footnote{The normalizing constant~$Z_{n,p}$ remains unchanged; it is just as in~\eqref{eq:normalizing-constant} but with $a=2$.}
\[\wt\PP_{n,p}(\dd\bs y)\ceq\frac1{\wtZ_{n,p}}\:\:
\cdot\!\!\prod_{1\le i<j\le n}\!\!\!\!\left\lvert y_i-y_j\right\rvert^b\cdot\:
\prod_{i=1}^ny_i^{\frac{c-1}2}\cdot\ee^{-2bn\gamma_p
	\|\bs y\|_{p/2}^{p/2}}\,\dd\bs y,
\qquad\bs y\in(\RR+)^n.\]
In particular, for every measurable functions $f\colon\RR+\to\RR$,
and $g\colon\RR\to\RR$,
\begin{equation}
	\int_{(\RR+)^n} g\left(\frac1n\sum_{i=1}^nf(y_i)\!\right)
	\,\wt\PP_{n,p}(\dd\bs y)
	\:=\:
	\int_{\RR^n} g\left(\frac1n\sum_{i=1}^nf\bigl(x_i^2\bigr)\!\right)
	\,\PP_{n,p}(\dd\bs x).\label{eq:changeofvar}
\end{equation}
It is here convenient to work with the empirical probability measure
$\wt L_{n,p}\ceq n^{-1}\sum_{i=1}^n\delta_{y_i}$ where
$\bs y\in(\RR+)^n$ is sampled from~$\wt\PP_{n,p}$, and 
we note that Corollary~\ref{cor:troncature} obviously still applies
with~$\wt L_{n,p}$ in place of~$L_{n,p}$.
Similarly to the previous section, we introduce
\begin{equation*}
	\wt\cI_p(\mu)\ceq4\int_{\RR+}\gamma_p\lvert y\rvert^{p/2}\,\mu(\dd y)
	-\iint_{(\RR+)^2_{\neq}}\log{\lvert x-y\rvert}\,\mu(\dd x)\mu(\dd y).
\end{equation*}
The counterparts of Lemmas~\ref{lem:minimizer} and~\ref{lem:meanconv} are as follows.
\begin{lemma}[Equilibrium measure]
\label{lem:conv-2ndCase}
	There exists a unique element~$\wt\mu_p$ minimizing
	the functional~$\wt\cI_p$ over the space $\cP(\RR+)$ of probability
	measures on~$\RR+$:
	\[\wt\cI_p(\wt\mu_p)\enspace
	=\inf_{\mu\in\cP(\RR+)}\wt\cI_p(\mu).\]
	Furthermore, $\wt\mu_p$ coincides with the image measure of~$\mu_p$ by
	the map $x\mapsto x^2$,
	and it has the following properties:
	\begin{enumerate}[label=(\roman*)]
		\item $\wt\mu_p$ is compactly supported, with
		support~$\cc{0}{1}$;
		\item $\wt\mu_p$ is absolutely continuous with respect to the Lebesgue measure,
		with density
		\[\frac{\dd\wt\mu_p}{\dd y}=\frac{py^{\frac p2-1}}\pi
		\:\int_{\sqrt y}^1
		\frac{u^{-p}}{\sqrt{1-u^2}}\,\dd u,\qquad 0\le y\le1;\]
		\item $\wt\cI_p(\wt\mu_p)=2\log2+\frac3p$.
    \end{enumerate}
\end{lemma}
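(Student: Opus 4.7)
The plan is to reduce the problem entirely to the already-proved Lemma~\ref{lem:minimizer} by identifying $\wt\mu_p$ as the pushforward of the symmetric measure $\mu_p$ under the squaring map $x\mapsto x^2$. This identification gives (i) and (ii) essentially for free, and reduces (iii) to a constant multiple of the known value $\cI_p(\mu_p) = \log 2 + \frac{3}{2p}$.

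\textbf{Step 1: an isometric correspondence.} First I would set up the change of variables at the level of the functionals. Let $\cP_{\mathrm{sym}}(\RR)$ denote the symmetric probability measures on $\RR$, and let $\Phi\colon\cP_{\mathrm{sym}}(\RR)\to\cP(\RR+)$ send $\mu$ to its pushforward $\nu$ under $x\mapsto x^2$. Writing $\mu=\frac12(\mu_+ + \tau_*\mu_+)$ with $\tau(x)=-x$ and $\mu_+$ supported in $\RR+$, and using the key factorisation
\[
\log|x-y| + \log|x+y| \;=\; \log|x^2 - y^2|
\qquad (x,y\ge 0),
\]
a direct computation gives
\[
\iint \log|x-y|\,\mu(\dd x)\mu(\dd y) \;=\; \tfrac12\!\iint \log|u-v|\,\nu(\dd u)\nu(\dd v),
\]
together with $\int |x|^p\,\mu(\dd x) = \int y^{p/2}\,\nu(\dd y)$. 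Combining these,
$\wt\cI_p(\Phi(\mu)) = 2\,\cI_p(\mu)$ for every $\mu\in\cP_{\mathrm{sym}}(\RR)$.

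\textbf{Step 2: transfer of the minimiser.} Since $\mu_p$ from Lemma~\ref{lem:minimizer} has the even density $f_p$, it lies in $\cP_{\mathrm{sym}}(\RR)$ and a fortiori minimises $\cI_p$ there. As $\Phi$ is a bijection onto $\cP(\RR+)$, the previous identity shows that $\wt\mu_p \ceq \Phi(\mu_p)$ is the unique minimiser of $\wt\cI_p$, with $\wt\cI_p(\wt\mu_p) = 2\cI_p(\mu_p) = 2\log 2 + \frac{3}{p}$, proving (iii).

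\textbf{Step 3: support and density.} Property (i) is immediate: the image of the support $\cc{-1}{1}$ of $\mu_p$ under squaring is $\cc{0}{1}$. For (ii), the standard pushforward-of-an-even-density formula yields, for $y\in\oo{0}{1}$,
\[
\frac{\dd\wt\mu_p}{\dd y}(y) \;=\; \frac{f_p(\sqrt{y})}{\sqrt{y}} \;=\; \frac{p\,y^{p/2-1}}{\pi}\int_{\sqrt{y}}^1 \frac{u^{-p}}{\sqrt{1-u^2}}\,\dd u,
\]
which is the announced formula after substituting the explicit expression of $f_p$ from Lemma~\ref{lem:minimizer}(ii).

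The only genuinely delicate point is the identity in Step~1: one must handle the splitting $\mu=\frac12(\mu_+ +\tau_*\mu_+)$ carefully (noting that $\mu_p$ gives no mass to $\{0\}$, so the decomposition is unambiguous up to a null set) and verify that the integrability of $\log|x-y|$ against $\mu\otimes\mu$ transfers correctly to integrability of $\log|u-v|$ against $\nu\otimes\nu$. Everything else is a direct translation via the bijection $\Phi$.
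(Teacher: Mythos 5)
Your proof is correct, but it takes a genuinely different route from the paper. The paper's proof is essentially a one-line citation: it observes that the weight on $\RR+$ is $v(y)=\exp(-2\gamma_p y^{p/2})=w(\sqrt y)^2$ with $w(x)=\exp(-\gamma_p|x|^p)$ the weight on $\RR$, and then invokes \cite[Theorem~IV.1.10(f)]{Saff97}, which is precisely the general ``folding'' principle you re-derive: the equilibrium measure for $w(\sqrt y)^2$ on the squared set is the pushforward of the equilibrium measure for $w$ under $x\mapsto x^2$. Your Step~1 identity $\wt\cI_p\circ\Phi=2\cI_p$, obtained from $\log\lvert x-y\rvert+\log\lvert x+y\rvert=\log\lvert x^2-y^2\rvert$ and the decomposition $\mu=\frac12(\mu_++\tau_*\mu_+)$, is exactly the computation underlying that theorem, and your Steps~2--3 match what the paper leaves as ``(i)--(iii) easily follow.'' What the citation buys is brevity and a clean handling of the potential-theoretic technicalities (atoms, infinite energy); what your direct argument buys is self-containedness and transparency --- the factor $2$ relating the two energies, hence the value $2\log2+\frac3p$ in (iii), is visible rather than hidden in a reference. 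One small point worth making explicit in Step~2: to get \emph{uniqueness} of $\wt\mu_p$ you need that any minimiser of $\cI_p$ over $\cP_{\mathrm{sym}}(\RR)$ is already the global minimiser $\mu_p$; this holds because $\mu_p$ is itself symmetric (its density $f_p$ is even), so $\inf_{\cP_{\mathrm{sym}}(\RR)}\cI_p=\inf_{\cP(\RR)}\cI_p$ and the uniqueness of Lemma~\ref{lem:minimizer} applies. With that sentence added, your argument is complete.
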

\begin{proof}
	In the previous section, the minimization
	over all \emph{real} probability measures
	of the functional~$\cI_p$
	corresponding to the weight function
	$w\colon x\mapsto\exp(-\gamma_p\lvert x\rvert^p)$ gave rise to the
	minimizer~$\mu_p$. We are now facing the minimization problem for
	probability measures on $\RR+=\{x^2 :x\in\RR\}$, with the
	weight function being
	$v\colon y\mapsto\exp(-2\gamma_p\,y^{p/2})=w(\sqrt y)^2$. According to
	\cite[Theorem~IV.1.10.(f)]{Saff97},
	the solution of the latter is simply the image measure of~$\mu_p$ by the map
	$x\mapsto x^2$. From this,
	(i)--(iii) easily follow.
\end{proof}
\begin{lemma}[Convergence towards the equilibrium measure]
\label{lem:meanconv2}\leavevmode
\begin{enumerate}[label=(\roman*)]
	\item\label{lem:meanconv2.i} For every continuous, polynomially bounded functions $f,g\colon\RR+\to\RR$,
	it holds that
	\[\lim_{n\to\infty}\EE{g\left(\langle \wt L_{n,p},f\rangle\right)}=g(\langle\wt\mu_p,f\rangle).\]
	\item\label{lem:meanconv2.ii} We have
	\[{\lim_{n\to\infty}{-\frac1{d_n}}\log{Z_{n,p}}}=\log2+\frac{3}{2p}.\]
\end{enumerate}
\end{lemma}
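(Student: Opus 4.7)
I would mirror the proof of Lemma~\ref{lem:meanconv}, now applied to the $\beta$-ensemble $\wt\PP_{n,p}$ on $(\RR+)^n$ — which has Vandermonde exponent~$b$, external potential $V(y)\ceq 4\gamma_p y^{p/2}$, and an extra weight $\prod_i y_i^{(c-1)/2}$ that is of lower order in~$n$. The plan is to substitute the inputs of Johansson~\cite{Johansson98} by their analogs for $\beta$-ensembles on~$\RR+$ as provided by Hiai and Petz~\cite{Hiai00}.

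For item~\ref{lem:meanconv2.i}, I would first reduce to a bounded, compactly supported test function~$f$ by invoking Corollary~\ref{cor:troncature}. The corollary is stated for $L_{n,p}$ but transfers to $\wt L_{n,p}$ either directly via the change of variables~\eqref{eq:changeofvar} or by rerunning its proof, since Theorem~\ref{thm:moment-estimate} is established for general parameters $(a,b,c)$ and the tail of $y_i=x_i^2$ is inherited from that of~$x_i$ with~$p$ replaced by~$p/2$. Once $g(\langle\wt L_{n,p},f\rangle)$ is uniformly bounded, the Hiai-Petz exponential moment estimate
\[
\frac1n\log\EE\exp\!\bigl(n\,\bigl\lvert\langle\wt L_{n,p},f\rangle-\langle\wt\mu_p,f\rangle\bigr\rvert\bigr)\xrightarrow[n\to\infty]{}0,
\]
combined with Markov's inequality and the Borel-Cantelli lemma, yields almost-sure convergence of $\langle\wt L_{n,p},f\rangle$ to $\langle\wt\mu_p,f\rangle$ and hence the claim by dominated convergence, exactly as in the proof of Lemma~\ref{lem:meanconv}.

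For item~\ref{lem:meanconv2.ii}, the footnote in Section~\ref{sec:non-symmetric} identifies the normalizing constant of $\wt\PP_{n,p}$ with $Z_{n,p}$, so it suffices to compute the free energy of the ensemble $\wt\PP_{n,p}$ on $\RR+$. The Hiai-Petz large deviation principle then gives
\[
-\frac{2}{bn^2}\log Z_{n,p}\xrightarrow[n\to\infty]{}\wt\cI_p(\wt\mu_p).
\]
Since $d_n=bn^2$ when $a=2$ by~\eqref{eq:dimension}, and $\wt\cI_p(\wt\mu_p)=2\log 2+3/p$ by item~(iii) of Lemma~\ref{lem:conv-2ndCase}, one concludes that
\[
-\frac{1}{d_n}\log Z_{n,p}\xrightarrow[n\to\infty]{}\tfrac{1}{2}\wt\cI_p(\wt\mu_p)=\log 2+\tfrac{3}{2p}.
\]

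The main obstacle I expect is a careful verification that the Hiai-Petz framework indeed applies in our specific setting. The potential $V(y)=4\gamma_p y^{p/2}$ is $\cC^{\infty}$ on $(0,\infty)$ but only H\"older at the origin when $p<2$; the equilibrium density inherits a $y^{p/2-1}$ singularity at~$0$ (see item~(ii) of Lemma~\ref{lem:conv-2ndCase}); and the extra factor $\prod_i y_i^{(c-1)/2}$ even diverges at~$0$ in the real case ($c=0$). Both are nevertheless benign at the free-energy scale — the subleading weight contributes only $O(n)$ to $\log Z_{n,p}$, which is negligible against the $O(n^2)$ leading term, and the edge singularity is of Marchenko-Pastur type and is covered by the $\RR+$ version of the theory. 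Modulo this bookkeeping, the entire argument is a line-by-line transcription of the proof of Lemma~\ref{lem:meanconv}.
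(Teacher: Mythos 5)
Your proposal is correct and follows essentially the same route as the paper: both parts rest on Theorem~5.5.1 of Hiai--Petz~\cite{Hiai00} (whose hypothesis $\gamma\ge0$ must indeed be relaxed to handle $c=0$, exactly the bookkeeping you flag), combined with Corollary~\ref{cor:troncature} and dominated convergence for~(i) and with $\wt\cI_p(\wt\mu_p)=2\log2+3/p$ and $d_n\sim bn^2$ for~(ii). The one small inaccuracy is that Hiai--Petz do not state a Johansson-type exponential moment estimate; they prove a large deviation principle with good rate function $\frac b2(\wt\cI_p-\wt\cI_p(\wt\mu_p))$, so for~(i) the superexponential concentration of $\langle\wt L_{n,p},f\rangle$ should be extracted from the LDP upper bound (then Borel--Cantelli gives almost sure weak convergence of $\wt L_{n,p}$ to $\wt\mu_p$, as in the paper), rather than from the exponential-moment bound you borrow from the proof of Lemma~\ref{lem:meanconv}.
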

\begin{proof}
We
	apply\footnote{At first sight, the application of Theorem~5.5.1 in~\cite{Hiai00}
	seemingly requires that $\gamma(n)\cequiv\frac{c-1}2$ be nonnegative (which is not
	true if $c=0$). It appears this condition is stated merely for simplicity and,
	referring to the notation there, the proof of the theorem remains valid as long
	as $2\alpha\beta+\gamma\ge0$ and $\gamma>-\alpha$.}
\cite[Theorem~5.5.1]{Hiai00}. This theorem provides the limit of $\wtZ_{n,p}^{\sfrac1{d_n}}$ stated in~(ii), as well as a
large deviation principle with good rate
function $\frac b2\bigl(\wt\cI_p-\wt\cI_p(\wt\mu_p)\bigr)$
for the random probability measures~$\wt L_{n,p}$. By the Borel-Cantelli lemma (see e.g.\ \cite[p.~212]{Hiai00} for details),
this entails that~$\wt L_{n,p}$ converges almost surely
towards~$\wt\mu_p$ in the sense of weak convergence of probability measures,
that is $\langle\wt L_{n,p},f\rangle\to\langle\wt\mu_p,f\rangle$ $\PP$-a.s.\ for
every continuous, bounded function $f\colon\RR+\to\RR$. As before, point~(i)
then follows from the dominated convergence theorem
and Corollary~\ref{cor:troncature}.
\end{proof}

It is now easy to derive a simple proof of
Theorem~\ref{thm:limit-isotropic-constant} in the case $a=2$.
\begin{proof}[Proof of Theorem~\ref*{thm:limit-isotropic-constant}, case $a=2$]
Recalling~\eqref{eq:thm1} and taking into account the change of variable~\eqref{eq:changeofvar}, we have
\[
\frac{1}{\sqrt {d_n}}\,I_q(\UBE)
\sim
\frac{\ee^{-\frac1p-\frac34}}{\sqrt{2\pi}}\,\wtZ_{n,p}^{-1/d_n} \,
\left(\Enp{ \langle\wt L_{n,p},h_1\rangle^{\frac q2}}\right)^{\!\frac1q},
\]
where $h_1(y)\ceq y$.
Lemma~\subref{lem:meanconv2}{ii} tells us that
$\wtZ_{n,p}^{-1/d_n}\sim2\ee^{\frac3{2p}}$, while
Lemma~\subref{lem:meanconv2}{i}
gives
\[\lim_{n\to\infty}\EE{\langle\wt L_{n,p},h_1\rangle^{\frac q2}}
=\langle\wt\mu_p,h_1\rangle^{\frac q2}.\]
Plugging in the value of $\langle\wt\mu_p,h_1\rangle$ given
by Lemma~\ref{lem:moment-h2}, we find
\[
\frac1{\sqrt{d_n}}\,\frac{\left(\esp{{\HS T^q}}\right)^{\sfrac1q}}{\lvert\UBE\rvert^{\sfrac{1}{d_n}}}
\sim\ee^{\frac1{2p}-\frac34}\sqrt{\frac{p}{\pi(p+2)}}.\qedhere
\]

\subsection{Asymptotics of~$c_n$}
It remains to establish the asymptotics~\eqref{eq:asym-c_n} for the
coefficient~$c_n$ involved in Weyl's integration formulas. This is well known from the formulas given in \cite{Anderson10}, see for example \cite[Lemma 3.3]{Kabluchko20c} in the self-adjoint case. We detail the proof for completeness.
\begin{lemma}[Asymptotics of~$c_n$]
\label{lem:cn}
	We have
	\begin{equation*}
		\sqrt n \cdot c_n^{1/d_n} \sim\ee^{\frac34} \sqrt{\frac{4\pi}{ab}}
		\quad 
		\text{as $n\to\infty$.}
	\end{equation*}
\end{lemma}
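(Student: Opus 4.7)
My plan is to short-circuit the Stirling computation by bootstrapping on the large-deviation asymptotic of the partition function already established in Lemma~\subref{lem:meanconv}{ii} and Lemma~\subref{lem:meanconv2}{ii}. I would specialize the volume identity~\eqref{eq:expr-volume} to $p = 2$: since $\gamma_2 = 1$ and $|B(S_2^n)\cap E|$ is simply the Euclidean unit ball of~$\R^{d_n}$, of volume $\pi^{d_n/2}/\Gamma(1 + d_n/2)$, the $\Gamma$-factors cancel and~\eqref{eq:expr-volume} collapses to the clean identity
\[
c_n \:=\: \frac{\pi^{d_n/2}}{(abn)^{d_n/2}\,Z_{n,2}}.
\]

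Taking logarithms, dividing by $d_n$, and invoking Lemma~\subref{lem:meanconv}{ii} (if $a = 1$) or Lemma~\subref{lem:meanconv2}{ii} (if $a = 2$) at $p = 2$, which yields $-\frac{1}{d_n}\log Z_{n,2} \to \log 2 + \tfrac{3}{4}$, one obtains
\[
\tfrac{1}{2}\log n \,+\, \frac{1}{d_n}\log c_n \:=\: \tfrac{1}{2}\log\pi - \tfrac{1}{2}\log(ab) - \frac{1}{d_n}\log Z_{n,2} \:\xrightarrow[n\to\infty]{}\: \tfrac{1}{2}\log\frac{4\pi}{ab} + \tfrac{3}{4}.
\]
Exponentiating gives exactly $\sqrt{n}\,c_n^{1/d_n} \to \ee^{3/4}\sqrt{4\pi/(ab)}$, which is the claim of the lemma.

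An alternative and self-contained route, which is apparently the one intended by the paper, avoids the detour through Lemmas~\ref{lem:meanconv} and~\ref{lem:meanconv2} and instead uses the known closed-form expression for $c_n$ as a product of Gamma functions in $a, b, c, n$ coming from the Weyl/Harish-Chandra integration formula (see Propositions~4.1.1 and~4.1.3 in \cite{Anderson10}, or Lemma~3.3 in \cite{Kabluchko20c} for the self-adjoint case), followed by Stirling's expansion $\log\Gamma(1 + x) = (x + \tfrac{1}{2})\log x - x + \tfrac{1}{2}\log(2\pi) + o(1)$ applied term by term, together with the Euler--Maclaurin estimate $\sum_{j=1}^n j\log j = \tfrac{n^2}{2}\log n - \tfrac{n^2}{4} + O(n\log n)$. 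The main obstacle in that second approach is purely accounting: the factor $\ee^{3/4}$ in the target asymptotic is produced precisely by the subleading $-n^2/4$ term in Euler--Maclaurin, and the $4\pi/(ab)$ requires carefully combining several $(2\pi)$ contributions from Stirling with the rescaling $(abn)^{-d_n/2}$; the bootstrapped approach above bypasses this bookkeeping entirely.
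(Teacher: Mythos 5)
Your proposal is correct and takes a genuinely different route from the paper. The paper's proof is a direct Stirling computation: it takes the closed-form expression for $c_n$ as a ratio involving $\lvert U_n(\F)\rvert$ (a product of Gamma functions, via \cite[Propositions 4.1.1, 4.1.3, 4.1.14]{Anderson10}) and expands each factor. Your argument instead specializes the volume identity~\eqref{eq:expr-volume} to $p=2$, where $\gamma_2=1$, $\sigma_2=\HS{\cdot}$, and $\lvert\UBE\rvert=\pi^{d_n/2}/\Gamma(1+d_n/2)$ is exact; the $\Gamma(1+d_n/2)$ factors cancel, leaving $c_n = \pi^{d_n/2}/\bigl((abn)^{d_n/2}\,Z_{n,2}\bigr)$, and the asymptotic then follows from $-d_n^{-1}\log Z_{n,2}\to\log 2+\tfrac34$ as given by Lemma~\subref{lem:meanconv}{ii} or~\subref{lem:meanconv2}{ii}. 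I checked for circularity: those two lemmas rest on \cite{Johansson98} and \cite{Hiai00}, which are pure $\beta$-ensemble statements over $\R^n$ and make no use of~\eqref{eq:asym-c_n}, and~\eqref{eq:expr-volume} comes only from the change-of-variables Lemma~\ref{lem:guedon-paouris} — so the argument is logically sound, though it would require reordering the exposition, since the paper states~\eqref{eq:asym-c_n} in Section~\ref{sec:reduction} before the partition-function lemmas are available. Your route buys a much shorter proof with no Stirling bookkeeping, at the cost of invoking a substantially heavier external input (potential-theoretic free-energy asymptotics) where the paper uses only elementary Gamma estimates; the paper's approach is also more transparent about where each constant originates. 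One small inaccuracy in your commentary: in the paper's computation the $\ee^{3/4}$ does not come solely from the $-n^2/4$ Euler--Maclaurin term; half of it comes from the $-z$ term in $\log\Gamma(z)=z\log z - z + o(z)$. This does not affect the validity of your actual proof.
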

\begin{proof}
	The coefficient~$c_n$ is related to the volume of the unitary group $U_n(\F)$.
	Specifically, supposing first $a=1$, we know after 
	\cite[Propositions~4.1.1]{Anderson10} that
\[c_n=\frac{\lvert U_n(\F)\rvert}{\lvert U_1(\F)\rvert^n\,n!},\]
	where, according to \cite[Proposition~4.1.14]{Anderson10},
	\[\lvert U_n(\F)\rvert=(2\pi)^{\frac{bn(n+1)}4}\cdot2^{n(1-\frac b2)}\Bigg/\prod_{k=1}^n\Gamma\left(\frac b2\,k\right).\]
	Since $\log{\Gamma(z})=z\log z-z+o(z)$ as $z\to\infty$,
	we have
	\[\log{\Gamma\left(\frac b2k\right)}=\frac b2k\log k+\frac b2\left(\log{\frac b2}-1\right)k+o(k),
	\quad k\to\infty.\]
	Therefore, using that $\sum_{k=1}^nk\log k=\frac12n^2\log n-\frac14n^2+o(n^2)$ and
	$d_n\sim bn^2/2$, we find
	\[\frac1{d_n}\log{\prod_{k=1}^n\Gamma\left(\frac b2\,k\right)}
	=\frac12\log n+\frac12\log{\frac b2}-\frac34+o(1).\]
	Hence
	\begin{equation*}
		\frac1{d_n}\log{c_n}=\frac1{d_n}\log{\lvert U_n(\F)\rvert} + o(1)
		=
		-\frac12\log n+\frac12\log{\frac{4\pi}b}+\frac34+o(1),
	\end{equation*}
	which leads to the statement in the case $a=1$.
	When $a=2$, we have instead $d_n\sim bn^2$
	and, after \cite[Proposition~4.1.3]{Anderson10},
	\[c_n=\frac{\lvert U_n(\F)\rvert^2}{\lvert U_1(\F)\rvert^n\,n!}\,
	2^{-\frac{bn(n-1)}2}.\]
	So the difference with above is that, here,
	\begin{align*}
		\frac1{d_n}\log{c_n}&=\frac2{d_n}\log{\lvert U_n(\F)\rvert}
		-\frac12\log2 + o(1)\\
		&=
		-\frac12\log n+\frac12\log{\frac{2\pi}b}+\frac34+o(1).
	\end{align*}
	Hence the statement in the case $a=2$.
\end{proof}

\begin{remark}
	We can recover the asymptotic volumes of the Schatten unit balls. These were recently derived in~\cite[Theorem~3.1]{Kabluchko20a}
	and~\cite[Theorem~1]{Kabluchko20b},
	completing the much earlier computations of
	Saint-Raymond~\cite{SaintRaymond84}. Indeed, starting from the expression of the
	volume of $\UBE$ in~\eqref{eq:expr-volume} and plugging in the asymptotics
	of~$Z_{n,p}$ (Lemmas~\ref{lem:meanconv} and~\ref{lem:meanconv2}) and of~$c_n$
	(Lemma~\ref{lem:cn}), we find
	\[
	\left\lvert{\UBE}\right\rvert^{\frac1{d_n}}
	=
	\left(
	ab
	n\gamma_p\right)
	^{\frac1p}\cdot
	\frac{c_n^{\sfrac1{d_n}}}{\Gamma\left(1+\frac{d_n}p\right)^{\!\sfrac1{d_n}}}\cdot Z_{n,p}^{1/d_n}
	\sim(abn\gamma_p)^{\frac1p}\cdot\frac{\ee^{\sfrac34}\sqrt{\frac{4\pi}{nab}}}{\left(\frac{d_n}{{\ee}p}\right)^{\!\sfrac1p}}\cdot\frac12\ee^{-\frac3{2p}}.\]
	Hence, because $d_n\sim abn^2/2$,
	\begin{flalign*}
	&&\hphantom{\qed}\lim_{n\to\infty}n^{\frac12+\frac1p}\,
	\lvert\UBE\rvert^{\frac1{d_n}}=
	(2p\gamma_p)^{\frac1p}\ee^{\frac34-\frac1{2p}}
	\sqrt{\frac\pi{ab}}.&&\qed
    \end{flalign*}
\end{remark}

\end{proof}

\section{Properties of the equilibrium measures}\label{sec:properties}
In this section we gather some auxiliary results on the equilibrium
measure~$\mu_p$ of Lemma~\ref{lem:minimizer}.
We first carry out some easy computations. Next, we establish the regularity
of the solution to the master equation occurring in the proof of
Proposition~\ref{pro:fluct}.

\subsection{Some integral computations}
We recall the notation $h_2(x)\ceq x^2$, $h_1(y)\ceq y$,
and the equilibrium measures~$\mu_p$ and~$\wt\mu_p$,
see Lemmas~\ref{lem:minimizer} and~\ref{lem:conv-2ndCase}.
We further denote by $\varrho(\dd x)\ceq(\pi\sqrt{1-x^2})^{-1}\ind{\{\lvert x\rvert<1\}}$ the standard Arcsine distribution on $\cc{-1}{1}$.
\begin{lemma}\label{lem:moment-h2}
	We have
	\[\langle\mu_p,h_2\rangle=\langle\wt\mu_p,h_1\rangle=\frac p{2p+4}.\]
\end{lemma}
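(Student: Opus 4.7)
The first equality is free: since Lemma~\ref{lem:conv-2ndCase} identifies $\wt\mu_p$ with the pushforward of~$\mu_p$ under $x\mapsto x^2$, we get directly
\[
\langle\wt\mu_p,h_1\rangle=\int_{\RR+}y\,\wt\mu_p(\dd y)=\int_{\RR}x^2\,\mu_p(\dd x)=\langle\mu_p,h_2\rangle,
\]
so it suffices to compute one of the two quantities.

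To evaluate $\langle\mu_p,h_2\rangle$, I would insert the explicit density $f_p$ from Lemma~\subref{lem:minimizer}{ii}, exploit the evenness of $f_p$ to reduce to $[0,1]$, and then switch the order of integration by Fubini. Concretely,
\[
\langle\mu_p,h_2\rangle=2\int_0^1 x^2\cdot\frac{p\,x^{p-1}}{\pi}\int_x^1\frac{v^{-p}}{\sqrt{1-v^2}}\,\dd v\,\dd x
=\frac{2p}{\pi}\int_0^1\frac{v^{-p}}{\sqrt{1-v^2}}\int_0^v x^{p+1}\,\dd x\,\dd v.
\]
The inner integral equals $v^{p+2}/(p+2)$, which neatly cancels the $v^{-p}$ factor.

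What remains is
\[
\langle\mu_p,h_2\rangle=\frac{2p}{\pi(p+2)}\int_0^1\frac{v^2}{\sqrt{1-v^2}}\,\dd v,
\]
and the last integral equals $\pi/4$ (substitute $v=\sin\theta$ and use $\int_0^{\pi/2}\sin^2\theta\,\dd\theta=\pi/4$). This gives $\langle\mu_p,h_2\rangle=p/(2p+4)$, as claimed.

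There is no real obstacle here; the only step worth checking carefully is the use of Fubini, which is justified since the integrand is nonnegative and the density $f_p$ is integrable on $[-1,1]$ (as $\mu_p$ is a probability measure).
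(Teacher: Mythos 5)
Your proof is correct, and it takes a different route from the paper's. The paper invokes a result of Van Assche (cited as Lemma~4.1 of that reference) stating that $\mu_p$ is the law of a product $AB$ of independent variables with $A$ Arcsine-distributed and $B\sim\mathrm{Beta}(p,1)$, whence $\langle\mu_p,h_2\rangle=\E[A^2]\,\E[B^2]=\tfrac12\cdot\tfrac p{p+2}$ in one line. You instead compute directly from the explicit density $f_p$ of Lemma~\subref{lem:minimizer}{ii}, swapping the order of integration over the triangle $0<x<v<1$ (Tonelli suffices, as you note, since the integrand is nonnegative); the cancellation of $v^{-p}$ against $v^{p+2}/(p+2)$ and the elementary integral $\int_0^1 v^2(1-v^2)^{-1/2}\,\dd v=\pi/4$ then give the same value. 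Your computation checks out at every step. The trade-off: your argument is self-contained and requires no external input beyond the density formula, while the paper's product representation is shorter once the cited lemma is granted and would also deliver all even moments of $\mu_p$ with equal ease. Either proof is acceptable.
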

\begin{proof}
	First,
	$\langle\mu_p,h_2\rangle=\langle\wt\mu_p,h_1\rangle$ because~$\wt\mu_p$ is
	the image measure of~$\mu_p$ by~$h_2$. Next we know
	\cite[Lemma~4.1]{Assche20} that~$\mu_p$ is the
	distribution of~$AB$ where~$A$ and~$B$ are independent variables
	with~$A$ following~$\varrho$ and~$B$
	following the $\mathrm{Beta}(p,1)$ distribution. We easily conclude that
	\[
	\langle\mu_p,h_2\rangle=\esp{A^2}\cdot\esp{B^2}=\frac12\cdot\frac p{p+2}.
	\qedhere
	\]
\end{proof}
\begin{lemma}\label{lem:variance-fluct}
	We have
\[
\frac1{\pi^2}\int_{-1}^1\int_{-1}^1
\frac{(x+y)^2(1-xy)}{\sqrt{1-x^2}\sqrt{1-y^2}}\,\dd x\dd y
=\frac12.\]
\end{lemma}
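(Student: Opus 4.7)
The plan is to recognize the kernel $\bigl(\pi\sqrt{1-x^2}\bigr)^{-1}$ as the density of the Arcsine distribution $\varrho$ on $\cc{-1}{1}$, which is already introduced in the paper just before the statement. Writing $A,B$ for i.i.d.\ random variables with law~$\varrho$, the double integral is exactly $\Esp{(A+B)^2(1-AB)}$, so the task reduces to a computation of moments of the Arcsine law.

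I would first expand the integrand:
\[
(A+B)^2(1-AB) = A^2 + 2AB + B^2 - A^3B - 2A^2B^2 - AB^3.
\]
Using independence of $A$ and $B$, the expectation factors into products of one-variable moments. The Arcsine density is even, so $\Esp{A} = \Esp{A^3} = 0$ and likewise for $B$; this kills the four mixed terms involving an odd power. Only $\Esp{A^2} + \Esp{B^2} - 2\Esp{A^2}\Esp{B^2}$ remains.

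Next I compute $\Esp{A^2}$ via the substitution $x = \sin\theta$:
\[
\Esp{A^2} = \frac{1}{\pi}\int_{-1}^1 \frac{x^2\,\dd x}{\sqrt{1-x^2}} = \frac{1}{\pi}\int_{-\pi/2}^{\pi/2}\sin^2\theta\,\dd\theta = \frac12.
\]
Plugging in gives $\tfrac12 + \tfrac12 - 2\cdot\tfrac12\cdot\tfrac12 = \tfrac12$, which is the claimed value.

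There is no real obstacle here; the only thing to be careful about is keeping track of which cross-terms vanish by the parity of~$\varrho$, and noting that both $A^3B$ and $AB^3$ have odd total degree in a single variable and hence vanish. The proof is essentially a two-line moment computation once the probabilistic reinterpretation is made.
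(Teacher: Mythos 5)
Your proof is correct and follows essentially the same route as the paper: both interpret the kernel as the Arcsine density, reduce the double integral to $\Esp{(X+Y)^2(1-XY)}$ for independent Arcsine variables, and conclude via $\Esp X=\Esp{X^3}=0$ and $\Esp{X^2}=\tfrac12$. No issues.
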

\begin{proof}
	Let $X,Y$ be independent variables with law~$\varrho$. Then the left-hand
	side is
	\begin{align*}
		\esp{(X+Y)^2(1-XY)}
		&=\esp{(X+Y)^2}-2\esp{X^2}\esp{Y^2}-\esp{X^3}\esp{Y}
		-\esp{X}\esp{Y^3}\\[.4em]
		&=\frac1{2},
	\end{align*}
	using that $\esp X=\esp Y=0$ and $\esp{X^2}=\esp{Y^2}=\frac12$.
\end{proof}

\subsection{Regularity of the solution to the master equation}
\label{sec:proof-fluct}
We establish the regularity of the solution~$\psi_p$ to the
master equation
\begin{equation}
	\Xi_p\psi=\frac12h_2+c_{h_2},\label{eq:functeq}
\end{equation}
where
\[\Xi_p\psi(x)\ceq-\frac12
V'_p(x)\psi(x)+\int\frac{\psi(x)-\psi(y)}{x-y}\,\mu_p(\dd y),
\qquad x\in\RR.\]
A general expression  in terms of the test function
and of the equilibrium measure  is given  in \cite[Lemma~3.3]{Bekerman18}, see
also Section~B.5 there. The inversion of the master operator was first achieved
in~\cite[Lemma~3.2]{Bekerman15}.
In our framework, the expression of~$\psi_p$ can be made quite explicit: first, we
compute
$c_{h_2}=-1/4$, and
\begin{equation}
\label{eq:psiin}
\psi_p(x) =  \begin{cases}
             -\dfrac{x\sqrt{1-x^2}}{2\pi f_p(x)},
               &\text{if }\lvert x\rvert\le1, 
               \\[3ex]
             -\dfrac{\lvert x\rvert\sqrt{x^2-1}}{2\zeta'_p(x)},
	           &\text{if }\lvert x\rvert>1,
	       \end{cases}
\end{equation}
where
\begin{equation}
	f_p(x)\ceq\frac{px^{p-1}}\pi\int_x^1\frac{v^{-p}}{\sqrt{1-v^2}}\,\dd v,\label{eq:exprfp}
\end{equation}
is the Lebesgue density of~$\mu_p$ which we have already seen in
Lemma~\subref{lem:minimizer}{ii}, and~$\zeta'_p$ is the odd function given for $x>1$ by
\begin{align}
	\zeta'_p(x)&\ceq\frac{\dd}{\dd x}\left(-{\int\log{\lvert x-y\rvert}\,\mu_p(\dd y})+\frac12\,V_p(x)\right)\notag\\[.4em]
	&\colspace= px^{p-1}\int_1^x\frac{v^{-p}}{\sqrt{v^2-1}}\,\dd v,\label{eq:zetaprime}
\end{align}
see \cite[pp.\ 240-241]{Saff97}. We further observe the similarity
between~\eqref{eq:exprfp} and~\eqref{eq:zetaprime}: applying the change of
variable $t\gets(1-v)/x$, we have, for every $0<x<1$,
\begin{align*}
	\pi f_p(1-x)&=\frac{p(1-x)^{p-1}}\pi\int_{1-x}^1\frac{u^{-p}}{\sqrt{1-u^2}}\,\dd u\\[.4em]
	&=p(1-x)^{p-1}\sqrt x\int_0^1\frac{(1-xt)^{-p}}{\sqrt{2-xt}}\,\frac{\dd t}{\sqrt t}\\[.4em]
	&=\sqrt x\,w_p(-x),
\end{align*}
where we have set
\begin{equation}
	w_p(u)\ceq p(1+u)^{p-1}\int_0^1\frac{(1+tu)^{-p}}{\sqrt{2+tu}}\,\frac{\dd t}{\sqrt t},\qquad u\ge-1.\label{eq:exprw}
\end{equation}
Similarly, we can see that $\zeta'_p(1+x)=\sqrt x\,w_p(x)$ holds for all $x>0$.
Because~$\psi_p$ is odd, we conclude that the solution $\psi_p\in\cC^1(\RR,\RR)$ to
the equation~\eqref{eq:functeq}
is more simply expressed by
\begin{equation}
	\psi_p(x)=-\frac{x\sqrt{1+\lvert x\rvert}}{2w_p(\lvert x\rvert-1)},\qquad x\in\RR.
	\label{eq:exprpsi2}
\end{equation}

We denote by~$\lceil p\rceil$ the smallest integer greater than or equal to~$p$.
\begin{lemma}[Regularity of~$\psi_p$]
\label{lem:regularity-psi}
	For every $p\in\oo{1}{\infty}$,
	the function~$\psi_p$ is of class~$\cC^{\lceil p\rceil-1}$. In~particular,
	it is of class~$\cC^3$ when $p>3$.
\end{lemma}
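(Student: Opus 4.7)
The plan is to analyze the regularity of $\psi_p$ directly from its explicit expression~\eqref{eq:exprpsi2}. Since $\psi_p$ is odd, it suffices to establish regularity on $[0, \infty)$; there $\psi_p(x) = -x\sqrt{1+x}/(2 w_p(x-1))$. On $(-1, \infty)$, the integrand in~\eqref{eq:exprw} may be differentiated in $u$ under the integral sign arbitrarily often: each $u$-differentiation brings down a factor of $t$ that absorbs the $t^{-1/2}$ singularity at the origin, and the resulting integrals are uniformly bounded on compact subsets of $(-1, \infty)$. Combined with $w_p(u) > 0$ for $u > -1$ (its integrand being pointwise positive), this yields $\psi_p \in \cC^{\infty}((0, \infty))$, so it remains only to verify the regularity at $x = 0$.

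Near the origin we use the alternative representation $\psi_p(x) = -x\sqrt{1-x^2}/(2\pi f_p(x))$, valid on $(-1, 1)$. The key step is a power expansion of $f_p$. Writing the binomial series $(1-v^2)^{-1/2} = \sum_{k \ge 0} c_k v^{2k}$ and integrating termwise on $[x, 1/2]$ (the tail on $[1/2, 1]$ contributes only a constant in $x$), one obtains, for $0 < x < 1$,
\[
f_p(x) = A(x^2) + B\, x^{p-1},
\]
with $A(y) = -\frac{p}{\pi}\sum_{k \ge 0} c_k y^k/(2k-p+1)$ analytic and satisfying $A(0) = p/(\pi(p-1)) > 0$, and $B \in \RR$; when $p$ is an odd integer, the formally singular index $k_0 = (p-1)/2$ produces instead a correction of the form $B' x^{p-1}\log x$. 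In either case, $f_p$ is bounded away from $0$ near the origin.

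Inserting this decomposition into $\psi_p$ and expanding $1/f_p$ as a geometric series in the non-analytic correction, then extending from $x > 0$ to $x \in \RR$ by oddness, we obtain an expression of the form
\[
\psi_p(x) = \psi_0(x) + \sum_{k \ge 1} \alpha_k(x^2)\operatorname{sign}(x)\lvert x\rvert^{1 + k(p-1)}
\]
(with $\log\lvert x\rvert$ modifications in the exceptional case), where $\psi_0$ is odd and analytic and each $\alpha_k$ is even and analytic. A direct derivative computation shows that $\operatorname{sign}(x)\lvert x\rvert^\alpha$ is of class $\cC^{\lfloor\alpha\rfloor}$ at $0$ when $\alpha$ is non-integer, and the presence of a $\log\lvert x\rvert$ factor costs at most one derivative. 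The least regular contribution is therefore the $k=1$ summand, with exponent $\alpha = p$, yielding exactly $\cC^{\lceil p\rceil - 1}$ regularity at $0$; all subsequent terms are strictly more regular. Combined with the $\cC^\infty$ smoothness on $\RR\setminus\{0\}$, we conclude that $\psi_p \in \cC^{\lceil p\rceil - 1}(\RR)$, which in particular gives $\cC^3$ when $p > 3$.

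The main technical obstacle is the decomposition of $f_p$: the termwise integration and the clean separation of the fractional-power (or logarithmic) non-analytic piece require some care, with a separate treatment of the integer values of $p$. Once the decomposition of $\psi_p$ into an analytic part plus fractional-power/log pieces is in hand, the final regularity count reduces to a routine computation of derivatives of $\lvert x\rvert^\alpha$ at the origin.
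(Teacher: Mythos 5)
Your proposal is correct and follows essentially the same route as the paper: reduction to $\co{0}{\infty}$ by oddness, smoothness away from the origin via $w_p$, and a binomial-series decomposition of $f_p$ isolating the non-analytic $x^{p-1}$ (and $x^{p-1}\log x$) part, so that multiplication by $x$ raises the singular exponent to $p$ and yields $\cC^{\lceil p\rceil-1}$ regularity at $0$. The only difference is cosmetic: you expand $1/f_p$ into an explicit series of fractional-power terms and count derivatives of $\operatorname{sign}(x)\lvert x\rvert^{\alpha}$ directly, where the paper argues more tersely via the regularity of $1/f_p$ and the Leibniz formula.
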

\begin{proof}
    Since~$\psi_p$ is odd, it suffices to check the regularity
    on~$\co{0}{\infty}$. It is plain that the function~$w_p$ in~\eqref{eq:exprw}
    does not vanish on~$\oo{-1}{\infty}$ and,
    by differentiation under the integral sign, that it is of class~$\cC^\infty$
    there. Given~\eqref{eq:exprpsi2}, it is then clear that~$\psi_p$ is
    of class~$\cC^\infty$ on~$\oo{0}{\infty}$.
    It remains to check that~$\psi_p$ has $\cC^{\lceil p\rceil -1}$-regularity
    at~$0^+$.

 Because of the expression~\eqref{eq:psiin} and of
 $x\mapsto\sqrt{1-x^2}$ being smooth at~$0$, the regularity of~$\psi_p$ at~$0$ is
	equivalent to that of
	$\bar\psi_p(x)\ceq x/f_p(x)$.
For $0<x<y<1$, the Taylor series
	\[(1-v^2)^{-\frac12}=\sum_{k=0}^\infty\frac{\binom{2n}n}{4^n}\,v^{2n}\]
	is uniformly convergent on~$\cc{x}{y}$, and expanding in~\eqref{eq:exprfp} gives
	\[f_p(x)=\frac{px^{p-1}}\pi\mathop{\mathrm{lim}\!\!\uparrow}_{y\uparrow1}\int_x^y\frac{v^{-p}}{\sqrt{1-v^2}}\,\dd v
	=\frac p\pi\sum_{n=0}^\infty\frac{\binom{2n}n}{4^n}\,x^{p-1}\int_x^1v^{2n-p}\,\dd v,\]
	by the monotone convergence theorem. The latter integral equals~$-{\log x}$ if
	$p=2n+1$, and $(x^{2n+1-p}-1)/(p-2n-1)$ otherwise.
In any case, there exist
    constants $A,B\in\RR$ (possibly zero) such that
	\[g(x)\ceq f_p(x)+Ax^{p-1}\log x+Bx^{p-1}=\frac p\pi\!\sum_{\substack{n\ge0\\2n+1\neq p}}\!\!\!
	\frac{\binom{2n}n}{4^n(p-2n-1)}\,x^{2n}\]
	is decomposed as a power series with radius~$1$, so~$g$ defines a function of class~$\cC^\infty$
	at~$0$. Because, for $p>1$, $Ax^{p-1}\log x+Bx^{p-1}$ has
	$\cC^{\lceil p\rceil -2}$-regularity at~$0^+$, $f_p$ is therefore of
	class~$\cC^{\lceil p\rceil-2}$ at~$0^+$. Then so is~$1/f_p$ since
	$f_p(0)=g(0)=p\pi/(p-1)\neq0$, and
	we conclude by Leibniz formula that~$\bar\psi_p$ (and thus~$\psi_p$) is of class~$\cC^{\lceil p\rceil-1}$ at~$0^+$.
\end{proof}

\printbibliography

\vspace{1em}\small
\noindent
\textsc{Benjamin Dadoun, Matthieu Fradelizi, Olivier Guédon, Pierre-André Zitt}\\
\address{\univaddress}\\
	\mail{benjamin.dadoun@univ-eiffel.fr}\\ 
	\mail{matthieu.fradelizi@univ-eiffel.fr}\\
	\mail{olivier.guedon@univ-eiffel.fr}\\
	\mail{pierre-andre.zitt@univ-eiffel.fr}

\end{document}